\newcommand\ocirc[1]{\ensurestackMath{\stackon[1pt]{#1}{\mkern2mu\circ}}}
\newtheorem{theorem}{Theorem}[section]
\newtheorem{proposition}[theorem]{Proposition}
\theoremstyle{definition}
\newtheorem{definition}[theorem]{Definition}
\theoremstyle{remark}
\newtheorem{remark}[theorem]{Remark}
\numberwithin{equation}{section}
\newcommand{\bx}{\mathbf{x}}
\newcommand{\by}{\mathbf{y}}
\newcommand{\bq}{\mathbf{q}}
\newcommand{\bu}{\mathbf{u}}
\newcommand{\bfu}{\mathbf{u}}
\newcommand{\bff}{\mathbf{f}}
\newcommand{\bv}{\mathbf{v}}
\newcommand{\bn}{\mathbf{n}}
\newcommand{\dy}{\, \mathrm{d}\by}
\newcommand{\dd}{\,\mathrm{d}}
\newcommand{\dq}{\, \mathrm{d} \mathbf{q}}
\newcommand{\dx}{\, \mathrm{d} \mathbf{x}}
\newcommand{\dt}{\, \mathrm{d}t}
\newcommand{\divx}{\mathrm{div}_{\mathbf{x}}}
\newcommand{\divq}{\mathrm{div}_{\mathbf{q}}}
\newcommand{\delx}{\Delta_{\mathbf{x}}}
\newcommand{\dely}{\Delta_{\mathbf{y}}}
\newcommand{\nabx}{\nabla_{\mathbf{x}}}
\newcommand{\naby}{\nabla_{\mathbf{y}}}
\newcommand{\nabq}{\nabla_{\mathbf{q}}}
\newcommand{\Delx}{\Delta_{\mathbf{x}}}
\newcommand{\Dely}{\Delta_{\mathbf{y}}}
\newcommand{\bT}{\mathbb{T}}
\newcommand{\R}{\mathbb{R}}
\newcommand{\Oeta}{\Omega_{\eta}}
\newcommand{\Ozeta}{\Omega_{\zeta}}
\begin{document}


\title[Serrin condition for Polymeric fluid-structure interaction]{Conditionally strong solution for macroscopic polymeric {SSS} interaction}

%
%
\author{Prince Romeo Mensah}
\address{Institut f\"ur Mathematik,
Technische Universit\"at Clausthal,
Erzstra{\ss}e 1,
38678 Clausthal-Zellerfeld}
\email{prince.romeo.mensah@tu-clausthal.de \\ orcid ID: 0000-0003-4086-2708}

\subjclass[2020]{76D03; 74F10; 35Q30; 35Q84; 82D60}

\date{\today}


\keywords{Ladyzhenskaya-Prodi-Serrin  condition, Oldroyd-B, Fluid-Structure interaction, Polymeric fluid}

\begin{abstract} 
The system under study is a solute-solvent-structure (SSS) interaction problem for the interaction of a dilute three-dimensional Oldroyd-B polymeric fluid with a two-dimensional viscoelastic shell. We show that a unique global strong solution to this system exists under the condition that the classical Ladyzhenskaya--Prodi--Serrin criterion holds for the velocity field and that the shell displacement is essentially bounded in time with values in the space of continuously differentiable functions. No requirement is needed for the polymer number density and the extra stress tensor for the solute component. 
\end{abstract}

\maketitle

\section{Introduction}

We consider the Oldroyd-B \cite{oldroyd1950formulation} model for the flow of a dilute polymeric fluid interacting with a flexible structure in  the closure of the deformed spacetime cylinder
\begin{align*}
I\times\Oeta:=\bigcup_{t\in I}\{t\}\times\Oeta \in \mathbb{R}^{1+3}.
\end{align*}
Here, $I:=(0,T)$ is a fixed time horizon and $\Oeta:={\Omega_{\eta(t)}}$ is a time-dependent flexible spatial domain whose closure is obtained through the parametrization of the boundary $\omega\subset \mathbb{R}^2$ of some reference configuration $\Omega\subset \mathbb{R}^3$. Further details will be given in Section \ref{sec:prelims}.
 Our goal is to find a structure displacement function $\eta:(t, \by)\in I \times \omega \mapsto   \eta(t,\by)\in \mathbb{R}$, a fluid velocity field $\mathbf{u}:(t, \mathbf{x})\in I \times \Oeta \mapsto  \mathbf{u}(t, \mathbf{x}) \in \mathbb{R}^3$, a pressure function $p:(t, \mathbf{x})\in I \times \Oeta \mapsto  p(t, \mathbf{x}) \in \mathbb{R}$, a polymer number density $\rho :(t, \mathbf{x} )\in I \times \Oeta  \mapsto \rho(t, \mathbf{x} ) \in \mathbb{R}$
and an extra stress tensor $\bT :(t, \mathbf{x} )\in I \times \Oeta  \mapsto \bT (t, \mathbf{x} ) \in \mathbb{R}^{3\times3}$
 such that the system of equations 
\begin{align}
\divx \bu=0, \label{divfree}
\\
\partial_t \rho+ (\mathbf{u}\cdot \nabx) \rho
= 
\Delx \rho 
,\label{rhoEqu}
\\
\partial_t \bu  + (\mathbf{u}\cdot \nabx)\mathbf{u} 
= 
\delx \bu -\nabx p
+\bff
+
\divx   \bT, \label{momEq}
\\
\partial_t^2 \eta - \partial_t\dely \eta + \dely^2 \eta = g - ( \mathbb{S}\bn_\eta )\circ \bm{\varphi}_\eta\cdot\bn \,\det(\naby\bm{\varphi}_\eta), 
\label{shellEQ}
\\
\partial_t \bT + (\mathbf{u}\cdot \nabx) \bT
=
(\nabx \bu)\bT + \bT(\nabx \bu)^\top - 2(\bT - \rho \mathbb{I})+\Delx \bT \label{solute}
\end{align}
holds on $I\times\Oeta\subset \mathbb R^{1+3}$ (with \eqref{shellEQ} defined on $I\times\omega\subset \mathbb R^{1+2}$). Here,
\begin{align*}
\mathbb{S}=  \nabx \bu +(\nabx \bu)^\top  -p\mathbb{I}+ \bT,
\end{align*}
is a tensor field, the vector $\bn_\eta$ is the normal at $\partial\Oeta$, $\mathbb{I}$ is the identity matrix and where all underlining dimensional parameters have been set to one for simplicity.
We complement \eqref{divfree}--\eqref{solute} with the following initial and boundary conditions
\begin{align}
&\eta(0,\cdot)=\eta_0(\cdot), \qquad\partial_t\eta(0,\cdot)=\eta_\star(\cdot) & \text{in }\omega,
\\
&\bu(0,\cdot)=\bu_0(\cdot) & \text{in }\Omega_{\eta_0},
\\
&\rho(0,\cdot)=\rho_0(\cdot),\quad\bT(0,\cdot)=\bT_0(\cdot) &\text{in }\Omega_{\eta_0},
\label{initialCondSolv}
\\
& 
\bn_{\eta}\cdot\nabx\rho=0,\qquad
\bn_{\eta}\cdot\nabx\bT=0 &\text{on }I\times\partial\Omega_{\eta}.
\label{bddCondSolv}
\end{align}
Furthermore, we impose periodicity on the boundary of $\omega$ and the following condition
\begin{align} 
\label{interface}
&\bu\circ\bm{\varphi}_\eta=(\partial_t\eta)\bn & \text{on }I\times \omega
\end{align}
at the interface between the polymeric fluid and the structure with normal vector $\bn$.
\\
The two unknowns $\rho$ and $\bT$ for the solute component of the polymer fluid are related via the identities
\begin{align} 
\label{macroClosure}
\bT(t, \bx)= \int_{B} f(t,\bx,\bq)\bq\otimes\bq \dq,
\qquad
\rho(t, \bx)= \int_{B} f(t,\bx,\bq) \dq
\end{align}
where for  $B=\mathbb{R}^3$ with elements $\bq\in B$, the function $f$ is the probability density function ($f\geq0$ a.e. on $\overline{I}\times\Oeta\times B$) satisfying the
Fokker--Planck equation
\begin{align}
\label{fokkerPlanck}
\partial_t f+\divx(\bu f)+\divq((\nabx\bu)\bq f)=\Delx f+\divq(M\nabq(f/M))
\end{align}
in $I\times\Oeta\times B$ for a Hookean dumbbell spring potential and Maxwellian
\begin{align*} 
U\Big(\frac{1}{2}|\bq|^2 \Big)=\frac{1}{2}|\bq|^2, \qquad\qquad M=\frac{\exp(-U(\tfrac{1}{2}|\bq|^2))}{\int_B\exp(-U(\tfrac{1}{2}|\bq|^2))\dq},
\end{align*}
respectively.

Very recently, the analysis of polymeric fluid-structure interaction problems was started in \cite{breit2021incompressible}. Whereas Newtonian fluid-structure interaction problems involve a solvent and a structure, a polymeric fluid-structure interaction problem involves the mutual interactions between a solute, a solvent, and a structure. Analyzing such a system, therefore, requires techniques from two hitherto separate fields of analysis in continuum mechanics, i.e. the analysis of polymeric fluids and fluid-structure interactions.  

In \cite{breit2021incompressible}, the authors constructed distributional solutions for the coupling between (1) the $3$D Fokker--Planck equation \eqref{fokkerPlanck} giving the mesoscopic description of the probability distribution of the solute or polymer, (2) the  $3$D incompressible Navier--Stokes equation \eqref{divfree} and \eqref{momEq} giving the macroscopic description of the solvent evolution, and (3) a  $2$D structure modeled by a shell equation \eqref{shellEQ} of Koiter type (where the term $ \Dely^2 \eta- \partial_t\Dely \eta$ is replaced by the gradient of the so-called \textit{Koiter energy}). The uniqueness of these distributional solutions is unknown but not expected. However, the solutions exist until potential degeneracies occur with the Koiter energy or with the structure deformation.
When the  $2$D Koiter shell in \cite{breit2021incompressible} is replaced by the $2$D viscoelastic shell equation \eqref{shellEQ}, the extension to the existence of a unique local-in-time strong solution was then shown in \cite{breit2023existence}. 
Note that for fixed spatial domains subject to periodic boundary conditions, one can construct solutions that are spatially more regular \cite{breit2021local}  than strong solutions. The corresponding result for the system with a structure displacement remains an interesting open problem even in lower dimensions.

The macroscopic closure of the Fokker--Planck equation \eqref{fokkerPlanck} gives rise to \eqref{rhoEqu} and \eqref{solute} by using the identities \eqref{macroClosure}. For the fully macroscopic solute-solvent-structure Oldroyd-B system \eqref{divfree}-\eqref{solute}, distributional solutions and solutions that are satisfied pointwise a.e. in spacetime have been constructed in \cite{mensah2023weak} when the solute-solvent subsystem are posed in $2$D and the structure subsystem is posed in $1$D. Here, provided no degeneracies occur with the structure deformation, both classes of solutions exist globally in time. In \cite{mensah2024vanishing}, the same $2$D/$1$D setting but for corotational fluids where $\nabx\bu$ in \eqref{solute} is replaced by its anti-symmetric part $\tfrac{1}{2}(\nabx\bu-(\nabx\bu)^\top)$ is considered. It is shown that any family of strong solutions parametrized
by the center-of-mass diffusion coefficient converges to a weak solution of the same system without center-of-mass diffusion (i.e. $\Delx\rho=0$ and $\Delx\bT=0$) but with essentially bounded polymer number density and extra stress. 

\section{Preliminaries and main results}
\label{sec:prelims}
For any two non-negative quantities $F$ and $G$, we write $F \lesssim G$  if there is a constant $c>0$  such that $F \leq c\,G$. If $F \lesssim G$ and $G\lesssim F$ both hold, we use the notation $F\sim G$.  The scaler matrix product of the matrices $\mathbb{A}=(a_{ij})_{i,j=1}^d$ and $\mathbb{B}=(b_{ij})_{i,j=1}^d$ is denoted by $\mathbb{A}:\mathbb{B}=\sum_{ij}a_{ij}b_{ji}$.
The symbol $\vert \cdot \vert$ may be used in four different contexts. For a scalar function $f\in \mathbb{R}$, $\vert f\vert$ denotes the absolute value of $f$. For a vector $\bff\in \mathbb{R}^d$, $\vert \bff \vert$ denotes the Euclidean norm of $\bff$. For a square matrix $\mathbb{F}\in \mathbb{R}^{d\times d}$, $\vert \mathbb{F} \vert$ shall denote the Frobenius norm $\sqrt{\mathrm{trace}(\mathbb{F}^T\mathbb{F})}$. Also, if $S\subseteq  \mathbb{R}^d$ is  a (sub)set, then $\vert S \vert$ is the $d$-dimensional Lebesgue measure of $S$.

For $I:=(0,T)$, $T>0$, and $\eta\in C(\overline{I}\times\omega)$ satisfying $\|\eta\|_{L^\infty(I\times\omega)}\leq L$ where $L>0$ is a constant, we define for $1\leq p,r\leq\infty$,
\begin{align*} 
L^p(I;L^r(\Omega_\eta))&:=\Big\{v\in L^1(I\times\Omega_\eta):\substack{v(t,\cdot)\in L^r(\Omega_{\eta(t)})\,\,\text{for a.e. }t,\\\|v(t,\cdot)\|_{L^r(\Omega_{\eta(t)})}\in L^p(I)}\Big\},\\
L^p(I;W^{1,r}(\Omega_\eta))&:=\big\{v\in L^p(I;L^r(\Omega_\eta)):\,\,\nabx v\in L^p(I;L^r(\Omega_\eta))\big\}.
\end{align*} 
Higher-order Sobolev spaces can be defined accordingly. For $k>0$ with $k\notin\mathbb N$, we define the fractional Sobolev space $L^p(I;W^{k,r}(\Oeta))$ as the class of $L^p(I;L^r(\Omega_\eta))$-functions $v$ for which 
\begin{align*}
\|v\|_{L^p(I;W^{k,r}(\Oeta))}^p
&=\int_I\bigg(\int_{\Oeta} \vert v\vert^r\dx
+\int_{\Oeta}\int_{\Oeta}\frac{|v(\bx)-v(\bx')|^r}{|\bx-\bx'|^{d+k r}}\dx\dx'\bigg)^{\frac{p}{r}}\dt
\end{align*}
is finite. Accordingly, we can also introduce fractional differentiability in time for the spaces on moving domains.

\subsection{Setup}
The reference spatial domain  $\Omega \subset \mathbb{R}^3$ has a boundary $\partial\Omega\subset \mathbb{R}^{2}$ that may consist of a flexible part $\omega\subset\mathbb{R}^{2}$ and a rigid part $\Gamma\subset \mathbb{R}^{2}$. However, because the analysis at the rigid part is significantly simpler, we shall identify the whole of $\partial \Omega$ with $\omega$ and endow it with periodic boundary conditions. Let $I:=(0,T)$ represent a time interval for a given constant $T>0$. The time-dependent  displacement of the structure is given by $\eta:\overline{I}\times\omega\rightarrow(-L,L)$ where $L>0$ is a fixed length of the tubular neighbourhood of $\partial\Omega$ given by
\begin{align*}
S_L:=\{\bx\in \mathbb{R}^3\,:\, \mathrm{dist}(\bx,\partial\Omega
)<L \}.
\end{align*}
For some $k\in\mathbb{N}$ large enough, we now assume that $\partial\Omega$  is parametrized by an injective mapping $\bm{\varphi}\in C^k(\omega;\mathbb{R}^3)$ with $\naby \bm{\varphi}\neq0$ such that
\begin{align*}
\partial{\Omega_{\eta(t)}}=\big\{\bm{\varphi}_{\eta(t)}:=\bm{\varphi}(\by)+\bn(\by)\eta(t,\by)\, :\, t\in I, \by\in \omega\big\}.
\end{align*}
The set $\partial{\Omega_{\eta(t)}}$ represents the boundary of the flexible domain at any instant of time $t\in I$ and the vector $\bn(\by)$ is a unit normal at the point $\by\in \omega$. 
We also let $\bn_{\eta(t)}(\by)$ be the corresponding normal of $\partial{\Omega_{\eta(t)}}$ at the spacetime point $\by\in \omega$ and $t\in I$. Then for $L>0$ sufficiently small, we note that $\bn_{\eta(t)}(\by)$ is close to $\bn(\by)$ and $\bm{\varphi}_{\eta(t)}$ is close to $\bm{\varphi}$. As a result,  it follows that
\begin{align*}
\partial_{y_1} \bm{\varphi}_{\eta(t)}\times
\partial_{y_2} \bm{\varphi}_{\eta(t)} \neq0 \quad\text{ and } \quad
\bn(\by)\cdot \bn_{\eta(t)}(\by)\neq 0
\end{align*}
for $\by\in \omega$ and $t\in I$. Thus, in particular, there is no loss of strict positivity of the Jacobian determinant provided that $\Vert \eta\Vert_{L^\infty(I\times\omega)}<L$.

For the interior points, we  transform the  reference domain $\Omega$ into a time-dependent moving domain $\Omega_{\eta(t)}$  whose state at time $t\in\overline{I}$ is given by
\begin{align*}
\Omega_{\eta(t)}
 =\big\{
 \bm{\Psi}_{\eta(t)}(\bx):\, \bx \in \Omega 
  \big\}.
\end{align*}
Here,
\begin{align*}
\bm{\Psi}_{\eta(t)}(\bx)=
\begin{cases}
\bx+\bn(\by(\bx))\eta(t,\by(\bx))\phi(s(\bx))     & \quad \text{if } \mathrm{dist}(\bx,\partial\Omega)<L,\\
    \bx & \quad \text{elsewhere } 
  \end{cases}
\end{align*}
is the Hanzawa transform with inverse $\bm{\Psi}_{-\eta(t)}$ and where for a point $\bx$ in the neighbourhood of $\partial\Omega$, the vector $\bn(\by(\bx))$ is the unit normal at the point $\by(\bx)=\mathrm{arg min}_{\by\in\omega}\vert\bx -\bm{\varphi}(\by)\vert$. Also, $s(\bx)=(\bx-\bm{\varphi}(\by(\bx)))\cdot\bn(\by(\bx))$ and $\phi\in C^\infty(\mathbb{R})$ is a cut-off function that is $\phi\equiv0$ in the neighbourhood of $-L$ and $\phi\equiv1$ in the neighbourhood of $0$. Note that $\bm{\Psi}_{\eta(t)}(\bx)$ can be rewritten as
\begin{align*}
\bm{\Psi}_{\eta(t)}(\bx)=
\begin{cases}
\bm{\varphi}(y(\bx))+\bn(\by(\bx))[s(\bx)+\eta(t,\by(\bx))\phi(s(\bx)) ]    & \quad \text{if } \mathrm{dist}(\bx,\partial\Omega)<L,\\
    \bx & \quad \text{elsewhere. } 
  \end{cases}
\end{align*}
The transform $\bm{\Psi}_\eta$ and its inverse 
 $\bm{\Psi}_\eta=\bm{\Psi}_{-\eta}$  satisfy the following properties, see \cite{breit2022regularity, BMSS} for details. If for some $\ell,R>0$, we assume that
\begin{align*}
\Vert\eta\Vert_{L^\infty(\omega)}
+
\Vert\zeta\Vert_{L^\infty(\omega)}
< \ell <L \qquad\text{and}\qquad
\Vert\naby\eta\Vert_{L^\infty(\omega)}
+
\Vert\naby\zeta\Vert_{L^\infty(\omega)}
<R
\end{align*}
holds, then for any  $s>0$, $\varrho,p\in[1,\infty]$ and for any $\eta,\zeta \in B^{s}_{\varrho,p}(\omega)\cap W^{1,\infty}(\omega)$ (where $B^{s}_{\varrho,p}$ is a Besov space), we have that the estimates
\begin{align}
\label{210and212}
&\Vert \bm{\Psi}_\eta \Vert_{B^s_{\varrho,p}(\Omega\cup S_\ell)}
+
\Vert \bm{\Psi}_\eta^{-1} \Vert_{B^s_{\varrho,p}(\Omega\cup S_\ell)}
 \lesssim
1+ \Vert \eta \Vert_{B^s_{\varrho,p}(\omega)},
\\
\label{211and213}
&\Vert \bm{\Psi}_\eta - \bm{\Psi}_\zeta  \Vert_{B^s_{\varrho,p}(\Omega\cup S_\ell)} 
+
\Vert \bm{\Psi}_\eta^{-1} - \bm{\Psi}_\zeta^{-1}  \Vert_{B^s_{\varrho,p}(\Omega\cup S_\ell)} 
\lesssim
 \Vert \eta - \zeta \Vert_{B^s_{\varrho,p}(\omega)}
\end{align}
and
\begin{align}
\label{218}
&\Vert \partial_t\bm{\Psi}_\eta \Vert_{B^s_{\varrho,p}(\Omega\cup S_\ell)}
\lesssim
 \Vert \partial_t\eta \Vert_{B^{s}_{ \varrho,p}(\omega)},
\qquad
\eta
\in W^{1,1}(I;B^{s}_{\varrho,p}(\omega))
\end{align}
holds uniformly in time with the hidden constants depending only on the reference geometry, on $L-\ell$ and $R$. 
\subsection{Concepts of solutions and main results}
We make clear in this section, the various notions of a solution we discuss in this paper and state the main result. Let us begin with a weak solution for which the system \eqref{divfree}-\eqref{solute} of equations are each satisfied weakly in the sense of distributions.
\begin{definition}[Weak solution]
\label{def:weaksolmartFP}
Let $(\bff, g, \rho_0, \bT_0, \bu_0, \eta_0, \eta_\star)$
be a dataset that satisfies
\begin{equation}
\begin{aligned}
\label{mainDataForAll}
&\bff \in L^2(I;L^{2}_\mathrm{loc}(\mathbb{R}^3)),
\qquad g\in L^2(I;L^{2}(\omega)) ,
\\&
\eta_0 \in W^{2,2}(\omega) \text{ with } \Vert \eta_0 \Vert_{L^\infty( \omega)} < L, \quad \eta_\star \in L^{2}(\omega),
\\&\bu_0 \in L^{2}_{\divx}(\Omega_{\eta_0} )\text{ is such that }\bu_0 \circ \bm{\varphi}_{\eta_0} =\eta_\star \bn \text{ on } \omega,
\\&
\rho_0\in L^{2}(\Omega_{\eta_0}), \quad
\bT_0\in L^{2}(\Omega_{\eta_0}),
\\&
\rho_0\geq 0,\,\, \bT_0>0 \quad \text{a.e. in } \Omega_{\eta_0}.
\end{aligned}
\end{equation}
We call
$(\eta, \bu, \rho,\bT)$  
a \textit{weak solution} of   \eqref{divfree}--\eqref{interface} with data $(\bff, g, \rho_0, \bT_0, \bu_0, \eta_0, \eta_\star)$  if: 
\begin{itemize}
\item[(a)] the following properties 
\begin{align*}
&\eta\in  W^{1,\infty}\big(I;L^{2}(\omega)  \big)  \cap W^{1,2}\big(I;W^{1,2}(\omega)  \big) \cap L^{\infty}\big(I;W^{2,2}(\omega)  \big),
\\&
\Vert\eta\Vert_{L^\infty(I\times\omega)}<L,
\\
&\bu\in
L^{\infty} \big(I; L^{2}(\Oeta) \big)\cap L^2\big(I;W^{1,2}_{\divx}(\Oeta)  \big),
\\
&
\rho \in   L^{\infty}\big(I;L^{2}(\Oeta)  \big)
\cap 
L^2\big(I;W^{1,2}(\Oeta)  \big),
\\
&
\bT \in   L^{\infty}\big(I;L^{2}(\Oeta)  \big)
\cap 
L^2\big(I;W^{1,2}(\Oeta)  \big),
\\&
\rho\geq 0,\,\, \bT>0 \quad \text{a.e. in } I\times\Oeta
\end{align*}
holds;
\item[(b)] for all  $  \psi  \in C^\infty (\overline{I}\times \R^3  )$, we have
\begin{equation}
\begin{aligned} 
\label{weakRhoEq}
\int_I  \frac{\mathrm{d}}{\dt}
\int_{\Oeta } \rho\psi \dx \dt 
&=
\int_I
\int_{\Oeta}[\rho\partial_t\psi + (\rho\bu\cdot\nabx) \psi] \dx\dt
\\&-
\int_I\int_{\Oeta}\nabx \rho \cdot\nabx \psi  \dx\dt;
\end{aligned}
\end{equation}
\item[(c)] for all  $  \mathbb{Y}  \in C^\infty (\overline{I}\times \R^3  )$, we have  
\begin{equation}
\begin{aligned} 
\label{weakFokkerPlanckEq}
\int_I  \frac{\mathrm{d}}{\dt}
\int_{\Oeta } \bT:\mathbb{Y} \dx \dt 
&=
\int_I
\int_{\Omega_{\eta }}[\bT :\partial_t\mathbb{Y} + \bT:(\bu\cdot\nabx) \mathbb{Y}] \dx\dt
\\&+
\int_I\int_{\Oeta}
[(\nabx \bu )\bT  + \bT (\nabx \bu )^\top]:\mathbb{Y} \dx\dt
\\&
-2\int_I\int_{\Oeta }(\bT :\mathbb{Y}  - \rho  \mathrm{tr}(\mathbb{Y} ))\dx\dt
\\&-
\int_I\int_{\Oeta}\nabx \bT ::\nabx \mathbb{Y}  \dx\dt
\end{aligned}
\end{equation}
where $\nabx\bT::\nabx\mathbb{Y}=\sum_{i=1}^3\partial_{x_i}\bT:\partial_{x_i}\mathbb{Y}$;
\item[(d)] for all  $(\bm{\phi},\phi)  \in C^\infty_{\divx} (\overline{I}\times \R^3  )\otimes  C^\infty (\overline{I}\times \omega )$ with $\bm{\phi}(T,\cdot)=0$, $\phi(T,\cdot)=0$ and $\bm{\phi}\circ \bm{\varphi}_\eta= \phi \bn$, we have
\begin{equation}
\begin{aligned}
\label{weakFluidStrut}
\int_I  \frac{\mathrm{d}}{\dt}\bigg(
\int_{\Oeta } \bu\cdot\bm{\phi} \dx 
+
\int_\omega\partial_t\eta\phi\dy\bigg) \dt 
&=
\int_I
\int_{\Oeta }[\bu \cdot\partial_t\bm{\phi} + \bu\cdot(\bu\cdot\nabx) \bm{\phi}] \dx\dt
\\&-
\int_I\int_{\Oeta }\big[\nabx \bu  :\nabx \bm{\phi}-\bff\cdot\bm{\phi} +\bT:\nabx\bm{\phi}\big] \dx\dt
\\&
+
\int_I\int_\omega\big[\partial_t\eta\partial_t\phi-\partial_t\naby\eta\cdot\naby\phi
-
\naby^2\eta:\naby^2\phi+g\phi\big]\dy\dt;
\end{aligned}
\end{equation}
\item[(e)] For
\begin{align*}
\mathcal{E}_{\mathrm{w}}(\mathrm{data}) :=&
\int_{\Omega_{\eta_{0}}}\mathrm{tr}(\bT_0)\dx
+
\Vert \bu_0\Vert_{L^2(\Omega_{\eta_{0}})}^2 
+
 \Vert\eta_\star\Vert_{L^2(\omega)}^2 
+ 
\Vert\Dely \eta_0\Vert_{L^2(\omega)}^2
\\&+
T\int_{\Omega_{\eta_{0}}}\rho_0 \dx
+
\int_I\Vert \bff\Vert_{L^2(\Oeta)}^2\dt
+
\int_I\Vert g\Vert_{L^2(\omega)}^2\dt,
\end{align*}
the energy inequality
\begin{equation}
\begin{aligned}
\sup_{t\in I}&
\bigg(
\int_{\Oeta}\mathrm{tr}(\bT(t))\dx
+
\Vert \bu(t)\Vert_{L^2(\Oeta)}^2 
+
 \Vert\partial_t \eta(t)\Vert_{L^2(\omega)}^2 
+ 
\Vert\Dely \eta(t)\Vert_{L^2(\omega)}^2
\bigg)
\\&+
\int_I
\int_{\Oeta}\mathrm{tr}(\bT)\dx\dt
+\int_I\Vert \nabx \bu \Vert_{L^2(\Oeta)}^2\dt
+\int_I\Vert\partial_t\naby \eta \Vert_{L^2(\omega)}^2\dt
\\&\lesssim
\mathcal{E}_{\mathrm{w}}(\mathrm{data}) 
\end{aligned}
\end{equation}
holds.
\item[(f)] In addition, we have
\begin{equation}
\begin{aligned}   
\sup_{t\in I}&
\big(\Vert\rho(t)\Vert_{L^2(\Oeta)}^2
+
\Vert\bT(t)\Vert_{L^2(\Oeta)}^2
\big)
+
\int_I\big(\Vert\nabx \rho \Vert_{L^2(\Oeta)}^2
+
\Vert\nabx \bT \Vert_{L^2(\Oeta)}^2\big)\dt
\\&+
\int_I \Vert\bT\Vert_{L^2(\Oeta)}^2\dt 
\lesssim
e^{c\mathcal{E}_{\mathrm{w}}(\mathrm{data}) }
\big((1+T)
\Vert\rho_0 \Vert_{L^2(\Omega_{\eta_0})}^2
+ 
\Vert\bT_0\Vert_{L^2(\Omega_{\eta_0})}^2
\big).
\end{aligned}
\end{equation} 
\end{itemize}
\end{definition}
Even for fixed domains (where formally $\Omega=\Oeta$ and $\eta \equiv 0$ ), the question of the existence of weak solutions to both the 3D incompressible
and the compressible Oldroyd-B model remains a nontrivial open problem. 
In $2$D, however,  the existence of global-in-time weak solutions has been shown in \cite{barrett2011existence} whereas a unique global-in-time strong solution has been shown to exist in \cite{constantin2012note}. Indeed, the closest result to the existence of weak solutions in the $3$D setting is \cite{bathory2021large} where the solute is described by a combination of the Oldroyd-B and the Giesekus models. Unfortunately, no singular limit result exists to show that one can obtain a weak solution of \eqref{divfree}-\eqref{solute} from that constructed in \cite{bathory2021large} for the mixed Oldroyd-B and the Giesekus models. The main obstacle in the construction of a weak solution in this 3D case is the lack of a useful estimate for the extra-stress tensor $\bT$. Indeed, from the basic energy estimate, the only information one derives is that $\mathrm{tr}(\bT) \in   L^{\infty}(I;L^{1}(\Oeta)  )$ and nothing about $\bT$ itself. The latter issue primarily stems from unsuitable interpolation estimates in 3D.

For flexible domains, the existence of a weak solution of \eqref{divfree}--\eqref{interface}, in the sense of Definition \ref{def:weaksolmartFP}, has been shown in \cite{mensah2023weak} for the 2D/1D polymeric fluid-structure system. The paper \cite{mensah2023weak} also includes the construction of a unique global strong solutions
for the 2D/1D version of \eqref{divfree}--\eqref{interface} in analogy with \cite{constantin2012note} for fixed domains. 

Returning to the 3D/2D setting, we now give the precise definition of a strong solution.

\begin{definition}[Strong solution]
\label{def:strongSolution}
Let $(\bff, g, \rho_0, \bT_0, \bu_0, \eta_0, \eta_\star)$
be a dataset that satisfies
\begin{equation}
\begin{aligned}
\label{mainDataForAllStrong}
&\bff \in L^2(I;L^{2}_\mathrm{loc}(\mathbb{R}^3)),
\qquad g\in L^2(I;L^{2}(\omega)) ,
\\&
\eta_0 \in W^{3,2}(\omega) \text{ with } \Vert \eta_0 \Vert_{L^\infty( \omega)} < L, \quad \eta_\star \in W^{1,2}(\omega),
\\&\bu_0 \in W^{1,2}_{\divx}(\Omega_{\eta_0} )\text{ is such that }\bu_0 \circ \bm{\varphi}_{\eta_0} =\eta_\star \bn \text{ on } \omega,
\\&
\rho_0\in W^{1,2}(\Omega_{\eta_0}), \quad
\bT_0\in W^{1,2}(\Omega_{\eta_0}),
\\&
\rho_0\geq 0,\,\, \bT_0>0 \quad \text{a.e. in } \Omega_{\eta_0}.
\end{aligned}
\end{equation}
We call 
$(\eta, \bu, p, \rho,\bT)$ 
a \textit{strong solution} of   \eqref{divfree}--\eqref{interface} with dataset $(\bff, g, \rho_0, \bT_0, \bu_0, \eta_0, \eta_\star)$  if:
\begin{itemize}
\item[(a)] the structure-function $\eta$ is such that $
\Vert \eta \Vert_{L^\infty(I \times \omega)} <L$ and
\begin{align*}
\eta \in W^{1,\infty}\big(I;W^{1,2}(\omega)  \big)\cap L^{\infty}\big(I;W^{3,2}(\omega)  \big) \cap  W^{1,2}\big(I; W^{2,2}(\omega)  \big)\cap  W^{2,2}\big(I;L^{2}(\omega)  \big) \cap  L^2\big(I;W^{4,2}(\omega)  \big);
\end{align*}
\item[(b)] the velocity $\bu$ is such that $\bu  \circ \bm{\varphi}_{\eta} =(\partial_t\eta)\bn$ on $I\times \omega$ and
\begin{align*} 
\bu\in  W^{1,2} \big(I; L^2_{\divx}(\Oeta ) \big)\cap L^2\big(I;W^{2,2}(\Oeta)  \big);
\end{align*}
\item[(c)] the pressure $p$ is such that 
\begin{align*}
p\in L^2\big(I;W^{1,2}(\Oeta)  \big);
\end{align*}
\item[(d)] the pair $(\rho,\bT)$ is such that 
\begin{align*}
\rho,\bT \in W^{1,2}\big(I;L^{2}(\Oeta)  \big) \cap L^\infty\big(I;W^{1,2}(\Oeta)  \big)\cap L^2\big(I;W^{2,2}(\Oeta)  \big);
\end{align*}
\item[(e)] equations \eqref{divfree}--\eqref{solute} are satisfied a.e. in spacetime with $\eta(0)=\eta_0$ and $\partial_t\eta(0)=\eta_\star$ a.e. in $\omega$, as well as $\bfu(0)=\bfu_0$, $\rho(0)=\rho_0$ and $\bT(0)=\bT_0$ a.e. in $\Omega_{\eta_0}$.
\end{itemize}
\end{definition}

With this definition in hand, we now state our main result.
\begin{theorem}
\label{thm:MAIN}
For a global weak solution $(\eta,\bu,\rho,\bT)$ of \eqref{divfree}--\eqref{interface} with dataset $(\bff, g, \rho_0, \bT_0, \bu_0, \eta_0, \eta_\star)$  satisfying \eqref{mainDataForAllStrong}, if
\begin{align}
\label{fluid-structureLPScondition}
\eta\in L^\infty(I;C^1(\omega)),\qquad
\bu\in L^{r}(I;L^{s}(\Oeta)),  
\end{align}
holds for $r\in[2,\infty)$ and  $s\in(3,\infty]$ satisfying $2/{r}+3/{s}\leq 1$, then $(\eta,\bu,p,\rho,\bT)$ is a global strong solution of \eqref{divfree}--\eqref{interface} that is unique in class of weak solutions.
\end{theorem}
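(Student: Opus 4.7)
The plan is a two-stage bootstrap from the weak-solution class of Definition \ref{def:weaksolmartFP} to the strong-solution class of Definition \ref{def:strongSolution}, followed by a Gronwall uniqueness estimate on the difference of two weak solutions sharing the same data. The standing hypothesis $\eta \in L^\infty(I;C^1(\omega))$, combined with $\|\eta\|_{L^\infty(I\times\omega)}<L$, guarantees via \eqref{210and212}--\eqref{218} that the Hanzawa transform $\bm{\Psi}_\eta$ and its inverse are uniformly bi-Lipschitz in time; hence the constants in the $W^{2,2}$-regularity theory for the Stokes/heat equations on $\Oeta$ and in the Gagliardo--Nirenberg and Sobolev embeddings on $\Oeta$ are uniform in $t$. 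In the first stage I would build a time-dependent solenoidal extension $\bv$ of $(\partial_t\eta)\bn$ into $\Oeta$, subtract it from $\bu$, and test the coupled momentum/shell system \eqref{momEq}--\eqref{shellEQ} against the admissible pair $(\partial_t(\bu-\bv),\partial_t^2\eta)$ along the lines of \cite{breit2023existence, breit2022regularity}. Using the Ladyzhenskaya--Prodi--Serrin bound \eqref{fluid-structureLPScondition}, the convective term is estimated by
\begin{align*}
\|(\bu\cdot\nabx)\bu\|_{L^2(\Oeta)}
\lesssim \|\bu\|_{L^s(\Oeta)}\|\nabx\bu\|_{L^{2s/(s-2)}(\Oeta)}
\lesssim \|\bu\|_{L^s(\Oeta)}\|\nabx\bu\|_{L^2(\Oeta)}^{1-3/s}\|\nabx^2\bu\|_{L^2(\Oeta)}^{3/s},
\end{align*}
after which Young's inequality with exponents $(r/2,\,2r/(r-2))$ absorbs the top-order factor into the viscous dissipation exactly when $2/r+3/s\le 1$. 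The polymeric forcing $\divx\bT$ is placed on the right-hand side and handled via the $L^2(I;W^{1,2})$-bound from Definition \ref{def:weaksolmartFP}(f). Coupling with the shell energy identity and applying Gronwall yield $\bu \in W^{1,2}(I;L^2_{\divx}(\Oeta))\cap L^2(I;W^{2,2}(\Oeta))$, together with the $\eta$-regularity listed in Definition \ref{def:strongSolution}(a); the pressure $p\in L^2(I;W^{1,2}(\Oeta))$ is then recovered from \eqref{momEq} by a de Rham/Bogovski\u{\i} argument on $\Oeta$.

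With Stage 1 in hand, the second stage treats the solute variables. Sobolev embedding on $\Oeta$ (with $t$-uniform constants) gives $\nabx\bu\in L^2(I;L^6)\cap L^\infty(I;L^2)$, while by hypothesis $\bu\in L^r(I;L^s)$. I would test \eqref{rhoEqu} against $-\delx\rho$ and \eqref{solute} against $-\delx\bT$; the homogeneous Neumann data \eqref{bddCondSolv} make these integrations by parts legitimate. The convective contributions $(\bu\cdot\nabx)\rho$ and $(\bu\cdot\nabx)\bT$ are treated in complete analogy with Stage 1; the stretching term $(\nabx\bu)\bT+\bT(\nabx\bu)^\top$ is controlled by $\|\nabx\bu\|_{L^6(\Oeta)}\|\bT\|_{L^3(\Oeta)}\|\nabx^2\bT\|_{L^2(\Oeta)}$ together with the three-dimensional interpolation $\|\bT\|_{L^3}\lesssim\|\bT\|_{L^2}^{1/2}\|\bT\|_{W^{1,2}}^{1/2}$, while the source $2\rho\mathbb{I}$ is absorbed through Stage 1. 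Gronwall on $t\mapsto\|\rho(t)\|_{W^{1,2}}^2+\|\bT(t)\|_{W^{1,2}}^2$ then yields $\rho,\bT \in W^{1,2}(I;L^{2}(\Oeta))\cap L^\infty(I;W^{1,2}(\Oeta))\cap L^2(I;W^{2,2}(\Oeta))$, which completes the promotion to the strong class; non-negativity of $\rho$ and positivity of $\bT$ are preserved by the parabolic maximum principle, now applicable since $\bu$ is a strong-solution transport field.

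For uniqueness, given two weak solutions $(\eta_i,\bu_i,\rho_i,\bT_i)$, $i=1,2$, with common data and both satisfying \eqref{fluid-structureLPScondition}, I would pull them back to the reference domain $\Omega$ via $\bm{\Psi}_{\eta_i}$, use \eqref{211and213} to bound the geometric commutators by $\|\eta_1-\eta_2\|_{W^{2,2}(\omega)}$, and derive a coupled difference system in the common reference frame. Testing this system against its own solution, using Serrin-type interpolation on $\bu_2$ and the Stage 1 upgrade of $\bu_1$ to control $(\delta\bu\cdot\nabx)\bu_2$ and $(\nabx\delta\bu)\bT_1$, Gronwall forces every difference to vanish. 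The main obstacle lies in Stage 1: one must simultaneously accommodate the kinematic coupling $\bu\circ\bm{\varphi}_\eta=(\partial_t\eta)\bn$ and its time derivative inside the $\partial_t\bu$-test without spoiling the divergence constraint, keep all Gagliardo--Nirenberg constants uniform in $t$ on a genuinely moving three-dimensional geometry, and absorb $\divx\bT$ without any $L^\infty$-control on $\bT$. It is precisely the $L^\infty_t C^1_y$-hypothesis on $\eta$ that, through \eqref{210and212}--\eqref{218}, furnishes the uniformly bi-Lipschitz Hanzawa transform making these three difficulties tractable.
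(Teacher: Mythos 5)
Your outline reproduces the right formal estimates (the Ladyzhenskaya--Prodi--Serrin absorption of the convective term, the $-\Delx$ and $\partial_t$ testings for $\rho$ and $\bT$), but it has a genuine logical gap: every one of these estimates is obtained by testing the equations with quantities built from higher derivatives of the solution itself ($\partial_t(\bu-\bv)$, $\partial_t^2\eta$, $\Delx\rho$, $\Delx\bT$, $\partial_t\bT$), and a weak solution in the sense of Definition \ref{def:weaksolmartFP} does not admit these as test functions. A conditional regularity theorem cannot be proved by formally running the strong-solution energy estimates on the weak solution; one must either mollify on the moving domain (delicate, and not addressed in your proposal) or, as the paper does, \emph{construct} a strong solution with the same data and then identify it with the given weak solution through a uniqueness argument valid in the weak class. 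The paper's architecture is exactly this: conditional strong solvability of the solvent--structure subproblem with prescribed stress is imported from \cite{BMSS} (Theorem \ref{thm:BMSS}), the solute subproblem is solved strongly for prescribed $(\bv,\zeta)$ via Galerkin (Theorem \ref{thm:mainFP}), a Banach fixed point in $X_\eta\otimes X_\eta$ with contraction in the weaker norm $Y_\eta$ produces a local strong solution of the coupled system, and Proposition \ref{prop:strongEst} (whose key acceleration bound \eqref{acceleration} is again quoted from \cite{BMSS}) extends it globally. Your proposal contains no analogue of the fixed-point/identification step, so even granting all your estimates, it does not yet show that the \emph{given} weak solution is strong.

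A second, related shortfall is that your Stage 1 is essentially a re-proof of the main theorem of \cite{BMSS}: the three difficulties you list at the end (compatibility of the kinematic coupling with the $\partial_t\bu$-test, time-uniform embedding constants on the moving geometry, absorption of $\divx\bT$) are precisely the content of that result, and you defer rather than resolve them, whereas the paper simply invokes it as Theorem \ref{thm:BMSS} with $\underline{\bT}=\bT\in L^2(I;W^{1,2})$ supplied by Definition \ref{def:weaksolmartFP}(f). Finally, in Stage 2 the statement that the Neumann conditions ``make these integrations by parts legitimate'' overlooks the boundary terms generated by Reynolds' transport theorem on the moving domain, i.e. the terms $\tfrac12\int_{\partial\Oeta}(\partial_t\eta\bn)\circ\bm{\varphi}_\eta^{-1}\cdot\bn_\eta\,|\nabx\rho|^2$ and its analogue for $\bT$; controlling these requires the trace/interpolation argument of \eqref{spaceRegRho0}--\eqref{spaceRegRho1} using $\partial_t\eta\in L^2(I;W^{2,2}(\omega))$, and they are not negligible. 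Your uniqueness sketch (pull-back by the Hanzawa map, commutators bounded via \eqref{211and213}, Gronwall) is in the right spirit and close to the contraction estimates of Section 4 and \cite{BMSS}, but it inherits the same admissibility caveat for the weak competitor.
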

%
 
Let us recall that the classical Ladyzhenskaya--Prodi--Serrin condition for just a solvent modeled by the incompressible Navier--Stokes equation requires that weak solution additionally satisfy $\bu\in L^{r}(I;L^{s}(\Oeta))$ for them to be a strong solution. For a solvent interacting with a viscoelastic structure, a weak solution $(\eta,\bu)$ is expected to satisfy \eqref{fluid-structureLPScondition} for them to become a strong solution. Since finding weak solutions to the $3$D Oldroyld-B is a notorious open problem, one would have certainly expected additional assumptions on $\bT$ for these elusive weak solutions to become strong ones. This turns out not to be the case as stated in Theorem \ref{thm:MAIN}.

\begin{remark}
Although a weak solution $(\eta,\bu,\rho,\bT)$ of \eqref{divfree}--\eqref{interface} is unknown, a weaker result is that a weak solution for the solvent-structure problem can constructed for a given tensor $\overline{\bT}\in L^2(I;W^{1,2}_{\mathrm{loc}}(\mathbb{R}^3))$ on one hand. On the other hand, for a given (strong) velocity $\overline{\bu}\in  W^{1,2} \big(I; L^2_{\divx}(\Oeta ) \big)\cap L^2\big(I;W^{2,2}(\Oeta)  \big)$ (or that  $\overline{\bu}\in L^2\big(I;W^{1,\infty}(\Oeta)  \big)$)  and a shell displacement function $\overline{\eta}\in W^{1,\infty}(I;W^{1,2}(\omega))\cap W^{1,2}(I;W^{2,2}(\omega))$ (or that $\overline{\eta}\in W^{1,8}(I;L^\infty(\omega))$), a weak solution can also be constructed for the solute subproblem. Once an approximation procedure exists for the solvent-structure subproblem, a fixed-point argument can then be used to close the entire system. The gist of these details can be seen in the arguments presented in the next section below.
\end{remark}

\begin{remark}
Just as in \cite[Proposition 1]{constantin2012note},  a solution $\bT$ of \eqref{solute} advected by a velocity field $\bu\in L^2(I;W^{1,\infty}(\Oeta))$ remains strictly positive if it were initially so. Similarly,  a solution $\rho$ of  \eqref{rhoEqu} also remains nonnegative if it were initially nonnegative.
Indeed,  if we test  \eqref{rhoEqu} with the nonpositive part $\rho_-=\min\{0,\rho\}$ of $\rho$, integrate over $\Oeta $ and use the boundary condition \eqref{bddCondSolv} together with Reynold's transport theorem, we obtain
\begin{align*}
\frac{1}{2}\frac{\dd}{\dt}\int_{\Oeta } \vert \rho_-\vert^2 \dx
+\int_{\Oeta } \vert \nabx \rho_-\vert^2\dx =0.
\end{align*} 
Therefore,   it follows that  $ \rho_-=0$ a.e. in $\Oeta$ for any $t\in I$ and thus, $ \rho=\rho_+=\max\{0,\rho\}$. 
\end{remark} 
 
\section{Strong solutions to subproblems}
\label{sec:strongSol}
The initial step in our strategy for constructing a solution involves solving the solvent-structure subproblem and the solute subproblem independently of each other. After that, in the next section, we will use a fixed-point argument to get a local solution to the fully coupled system. The very last section will then involve the extension from a local to a global solution.
\subsection{The solvent-structure subproblem}
In the following, for a given stress tensor $\underline{\bT}$, a given pair of body forces $\bff$ and $g$, we wish to find a strong solution to the following solvent-structure system of equations
\begin{align}
\divx \bu=0, \label{divfreeAlone} 
\\
\partial_t \bu  + (\mathbf{u}\cdot \nabx)\mathbf{u} 
= 
 \delx \bu -\nabx p
+\bff
+
\divx   \underline{\bT}, 
\\
\partial_t^2 \eta - \partial_t\dely \eta + \dely^2 \eta = g - ( \mathbb{S}\bn_\eta )\circ \bm{\varphi}_\eta\cdot\bn \,\det(\naby\bm{\varphi}_\eta),\label{shellAlone}
\end{align}
defined on $I\times\Oeta\subset \mathbb R^{1+3}$ (with \eqref{shellAlone} defined on   $I\times\omega\subset \mathbb R^{1+2}$) where
\begin{align*}
\mathbb{S}=  \nabx \bu +(\nabx \bu)^\top  -p\mathbb{I}+ \underline{\bT}.
\end{align*}
We then complement \eqref{divfreeAlone}--\eqref{shellAlone} with the following initial and interface conditions
\begin{align}
&\eta(0,\cdot)=\eta_0(\cdot), \qquad\partial_t\eta(0,\cdot)=\eta_\star(\cdot) & \text{in }\omega,  
\\  
&\bu(0,\cdot)=\bu_0(\cdot) & \text{in }\Omega_{\eta_0}.
\\
\label{interfaceAlone}
&\bu\circ\bm{\varphi}_\eta=(\partial_t\eta)\bn & \text{on }I\times \omega.
\end{align}
For the setup above, the precise definition of a strong solution is given as follows.
\begin{definition}[Strong solution]
\label{def:strongSolutionAlone}
Let $(\bff, g, \eta_0, \eta_\star, \bu_0, \underline{\bT})$ be a dataset that satisfies
\begin{equation}
\begin{aligned}
\label{datasetAlone}
&\bff \in L^2\big(I; L^2_{\mathrm{loc}}(\mathbb{R}^3 )\big),\quad
g \in L^2\big(I; L^{2}(\omega)\big), \quad
\eta_0 \in W^{3,2}(\omega) \text{ with } \Vert \eta_0 \Vert_{L^\infty( \omega)} < L, 
\\
&\eta_\star \in W^{1,2}(\omega), \quad
\underline{\bT}\in L^2(I;W^{1,2}_{\mathrm{loc}}(\R^3)), \quad
\bu_0\in W^{1,2}_{\mathrm{\divx}}(\Omega_{\eta_0} ) \text{ is such that }\bu_0 \circ \bm{\varphi}_{\eta_0} =\eta_\star \bn \text{ on } \omega.
\end{aligned}
\end{equation} 
We say that
$( \eta, \bu,  p )$
is a \textit{strong solution}  of  \eqref{divfreeAlone}--\eqref{interfaceAlone} with  dataset $(\bff, g, \eta_0, \eta_\star, \bu_0, \underline{\bT})$ if the following holds:
\begin{itemize}
\item[(a)]  the structure displacement $\eta$ is such that $
\Vert \eta \Vert_{L^\infty(I \times \omega)} <L$ and
\begin{align*}
\eta \in W^{1,\infty}\big(I;W^{1,2}(\omega)  \big)\cap L^{\infty}\big(I;W^{3,2}(\omega)  \big) \cap  W^{1,2}\big(I; W^{2,2}(\omega)  \big)\cap  W^{2,2}\big(I;L^{2}(\omega)  \big) \cap  L^{2}\big(I;W^{4,2}(\omega)  \big) ;
\end{align*}
\item[(b)] the velocity $\bu$ is such that $\bu  \circ \bm{\varphi}_{\eta} =(\partial_t\eta)\bn$ on $I\times \omega$ and
\begin{align*} 
\bu\in  W^{1,2} \big(I; L^2_{\divx}(\Oeta ) \big)\cap L^2\big(I;W^{2,2}(\Oeta)  \big);
\end{align*}
\item[(c)] the pressure $p$ is such that 
\begin{align*}
p\in L^2\big(I;W^{1,2}(\Oeta)  \big);
\end{align*}
\item[(d)] the equations \eqref{divfreeAlone}--\eqref{shellAlone} are satisfied a.e. in spacetime with $\eta(0)=\eta_0$ and $\partial_t\eta=\eta_\star$ a.e. in $\omega$ as well as $\bfu(0)=\bfu_0$ a.e. in $\Omega_{\eta_0}$.
\end{itemize}
\end{definition}
\noindent
The existence of a conditionally unique global-in-time strong solution to \eqref{divfreeAlone}--\eqref{interfaceAlone} in the sense of Definition \ref{def:strongSolutionAlone} has recently been shown in \cite[Theorem 1.1]{BMSS}. The statement of the result is: 
\begin{theorem}\label{thm:BMSS}
Let $(\eta,\bu)$ be a weak solution to \eqref{divfreeAlone}--\eqref{interfaceAlone}. Suppose that we
have
\begin{align}\label{eq:regu'0}
\eta\in L^\infty(I;C^{1}(\omega)), \qquad 
\bu\in L^{r}(I;L^{s}(\Omega_\eta))
\end{align}
for any $r\in[2,\infty)$ and $s\in(3,\infty]$ such that $2/{r}+3/{s}\leq1$.
Then $(\eta,\bu)$ is a strong solution to \eqref{divfreeAlone}--\eqref{interfaceAlone}.
Moreover, $(\eta,\bu)$  is unique in the class of weak solutions satisfying the energy inequality.
\end{theorem}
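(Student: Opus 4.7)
\textbf{Plan for the proof of Theorem \ref{thm:BMSS}.} The strategy is to bootstrap a given weak solution $(\eta,\bu)$ up to the strong class of Definition \ref{def:strongSolutionAlone}, exploiting the Ladyzhenskaya--Prodi--Serrin hypothesis \eqref{eq:regu'0} on $\bu$ to tame the convective nonlinearity and the assumption $\eta\in L^\infty(I;C^1(\omega))$ to keep the Hanzawa pullback uniformly well behaved in the sense of \eqref{210and212}--\eqref{218}. Uniqueness in the weak class then follows from a weak--strong Gronwall argument.

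I would first pull the problem back to the reference domain $\Omega$ via $\bm{\Psi}_\eta$ and work with $\hat{\bu}(t,\cdot)=\bu(t,\bm{\Psi}_\eta(\cdot))$, on which the resulting nonautonomous Stokes--shell system has uniformly bounded, uniformly elliptic coefficients. I would then test the momentum equation with (a suitable regularization of) $\partial_t\hat{\bu}$, after subtracting an extension into $\Omega$ of $(\partial_{tt}\eta)\bn$ so as to preserve the kinematic condition \eqref{interfaceAlone}, and the shell equation simultaneously with $\partial_{tt}\eta$; thanks to compatibility at $\omega$, the interface pairings reduce to commutator terms generated by $\partial_t\bm{\Psi}_\eta$, which are of lower order by \eqref{218}. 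The critical step is the Serrin bound on the convection: H\"older combined with the interpolation
\begin{align*}
\|\nabx\hat{\bu}\|_{L^{2s/(s-2)}(\Omega)}
\lesssim
\|\nabx\hat{\bu}\|_{L^2(\Omega)}^{1-3/s}\|\nabx^2\hat{\bu}\|_{L^2(\Omega)}^{3/s},
\end{align*}
together with $2/r+3/s\le 1$ and Young's inequality, bounds the convective term by a small multiple of $\|\partial_t\hat{\bu}\|_{L^2(\Omega)}^2+\|\nabx^2\hat{\bu}\|_{L^2(\Omega)}^2$ plus an $L^1$-in-time quantity. A Gronwall loop then yields
\begin{align*}
\sup_{t\in I}\bigl(\|\nabx\hat{\bu}(t)\|_{L^2(\Omega)}^2+\|\partial_t\naby\eta(t)\|_{L^2(\omega)}^2+\|\naby^2\eta(t)\|_{L^2(\omega)}^2\bigr)+\int_I\bigl(\|\partial_t\hat{\bu}\|_{L^2(\Omega)}^2+\|\partial_{tt}\eta\|_{L^2(\omega)}^2\bigr)\dt<\infty.
\end{align*}

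With $\partial_t\bu\in L^2(I;L^2(\Oeta))$ in hand, the instantaneous momentum equation reads as a steady Stokes system on the uniformly $C^1$ domain $\Oeta$ with $L^2$ datum and nonhomogeneous boundary velocity $(\partial_t\eta)\bn\circ\bm{\varphi}_\eta^{-1}$, whose regularity is ensured by the control of $\eta$ gained above. Standard Stokes regularity then delivers $\bu\in L^2(I;W^{2,2}(\Oeta))$ and, via de Rham, $p\in L^2(I;W^{1,2}(\Oeta))$. Inserting this back into \eqref{shellAlone} and exploiting the parabolic structure of the shell equation (present because of the term $-\partial_t\Dely\eta$) yields $\eta\in L^2(I;W^{4,2}(\omega))$, completing the strong-solution regularity of Definition \ref{def:strongSolutionAlone}. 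For uniqueness, given two weak solutions $(\eta_i,\bu_i)$ with the same data, I would pull both back to the reference configuration associated with (say) $\eta_1$, subtract, and apply Gronwall to the differences; the bilinear differences in the convective and interface terms are controlled because the strong solution just constructed provides the spatial Lipschitz regularity needed on one of the factors.

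The principal technical obstacle lies in the Serrin testing step: the natural test function $\partial_t\hat{\bu}$ does not satisfy the kinematic boundary condition, so one must construct a divergence-free lift of $(\partial_{tt}\eta)\bn$ whose commutators with $\bm{\Psi}_\eta$ remain controllable in the right norms. Tracking these commutators and the moving-boundary corrections while keeping every error term absorbable is where $\eta\in L^\infty(I;C^1(\omega))$ becomes indispensable, since this assumption prevents $\|\nabx\bm{\Psi}_\eta^{-1}\|_{L^\infty}$ from degenerating and makes the estimates \eqref{210and212}--\eqref{218} applicable uniformly on $I$.
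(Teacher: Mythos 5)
This theorem is not proved in the paper at all: it is quoted verbatim from \cite[Theorem 1.1]{BMSS} (``Ladyzhenskaya--Prodi--Serrin condition for fluid-structure interaction systems''), and the present paper only uses its conclusion (together with the acceleration bound \eqref{acceleration} from \cite[Theorem 4.1]{BMSS}) as a black box. So there is no internal proof to compare against; your proposal has to be judged as a reconstruction of the argument in the cited reference. At the level of strategy it is consistent with what that reference does: an acceleration-type estimate in which the LPS hypothesis \eqref{eq:regu'0} absorbs the convective term via the Gagliardo--Nirenberg interpolation you wrote, the hypothesis $\eta\in L^\infty(I;C^1(\omega))$ keeping the Hanzawa map nondegenerate so that \eqref{210and212}--\eqref{218} apply uniformly, followed by maximal regularity for the Stokes/shell system and a weak--strong uniqueness argument.

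However, as a proof the proposal has genuine gaps beyond the one you flag. First, you propose to test the equations satisfied by the \emph{weak} solution with $\partial_t\hat\bu$ and $\partial_{tt}\eta$; a weak solution in the class of Definition \ref{def:weaksolmartFP} does not have the regularity needed for these test functions, so the bootstrap cannot be run directly on $(\eta,\bu)$. The standard (and the cited) route is structurally different: one establishes a local-in-time strong solution, proves weak--strong uniqueness to identify it with the given weak solution on its interval of existence, and then uses the conditional estimate under \eqref{eq:regu'0} as a continuation criterion to reach all of $I$; your sketch conflates these steps and presents uniqueness only as an afterthought rather than as the mechanism that transfers the estimates to the weak solution. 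Second, the pressure is absent precisely where it is most dangerous: when testing with $\partial_t\hat\bu$ the pressure term does not drop out (the pulled-back field is not solenoidal, and the corrected test function is not either), and the pressure trace also enters the shell forcing through $\mathbb{S}\bn_\eta$, so $p$ must be estimated \emph{simultaneously} with the acceleration (typically via an auxiliary Stokes problem and the structure equation controlling its boundary behaviour), not recovered afterwards by de Rham as you suggest. Without these two ingredients the Gronwall loop you describe does not close, so the proposal should be regarded as a plausible outline of \cite{BMSS} rather than a proof.
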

Here, a weak solution satisfies \eqref{divfreeAlone}--\eqref{interfaceAlone} in the sense of distribution and is shown in \cite{lengeler2014weak}. We now proceed to the construction of a strong solution to the solute subproblem.
\subsection{The solute subproblem}
In this section, we construct a strong solution of the solute subproblem for a given flexible domain $\Omega_\zeta$ and a known solenoidal vector field $\bv$. This subproblem is given by:
\begin{align} 
\partial_t \rho+ (\bv\cdot \nabx) \rho
= 
\Delx \rho 
,\label{rhoEquAlone}
\\
\partial_t \bT + (\bv\cdot \nabx) \bT
= 
(\nabx \bv)\bT + \bT(\nabx \bv)^\top - 2(\bT - \rho \mathbb{I})+\Delx \bT \label{soluteSubPro}
\end{align}
on $I\times\Omega_\zeta\subset \mathbb R^{1+3}$ subject to the following initial and boundary conditions
\begin{align} 
&\rho(0,\cdot)=\rho_0(\cdot),\quad\bT(0,\cdot)=\bT_0(\cdot) &\text{in }\Omega_{\zeta(0)},
\label{initialCondSolvSubPro} 
\\
& 
\bn_{\zeta}\cdot\nabx\rho=0,\qquad
\bn_{\zeta}\cdot\nabx\bT=0 &\text{on }I\times\partial\Omega_{\zeta}.
\label{bddCondSolvAlone}
\end{align}
The two unknowns $\rho$ and $\bT$ for the solute component of the polymer fluid are related via the identities
\begin{align*} 
\bT(t, \bx)= \int_{B} f(t,\bx,\bq)\bq\otimes\bq \dq, 
\qquad
\rho(t, \bx)= \int_{B} f(t,\bx,\bq) \dq
\end{align*}
where $f$ solves \eqref{fokkerPlanck}.
We now state the precise definition of what we mean by a strong solution.  
\begin{definition} 
\label{def:strsolmartFP}
Assume that  $(\rho_0,\bT_0, \bv, \zeta)$ satisfies
\begin{equation}
\begin{aligned}
\label{fokkerPlanckDataAlone}
&\rho_0,\bT_0\in  W^{1,2}( \Omega_{\zeta(0)}),
\\&
\rho_{0}\geq 0,\,\, \bT_{0}>0 \quad \text{a.e. in } \Omega_{\zeta(0)} ,
\\&
\bv\in W^{1,2} \big(I; L^2_{\divx}(\Omega_\zeta ) \big)\cap L^2\big(I;W^{2,2}(\Omega_\zeta )  \big),
\\
&\zeta\in W^{1,\infty}\big(I;W^{1,2}(\omega)  \big)\cap L^{\infty}\big(I;W^{3,2}(\omega)  \big) \cap  W^{1,2}\big(I; W^{2,2}(\omega)  \big)\cap  W^{2,2}\big(I;L^{2}(\omega)  \big)
,
\\& \bv  \circ \bm{\varphi}_{\zeta} =(\partial_t\zeta)\bn
\quad \text{on }I \times \omega,  \quad\|\zeta\|_{L^\infty(I\times\omega)}<L.
\end{aligned}
\end{equation}
We call
$(\rho,\bT)$
a \textit{strong solution} of   \eqref{rhoEquAlone}-\eqref{bddCondSolvAlone} with dataset $(\rho_0,\bT_0, \bv, \zeta)$ if 
\begin{itemize}
\item[(a)] $(\rho,\bT)$ satisfies
\begin{align*}
\rho,\bT&\in   W^{1,2} \big(I; L^2(\Omega_\zeta ) \big)\cap L^\infty\big(I;W^{1,2}(\Omega_\zeta )  \big)
\cap L^2\big(I;W^{2,2}(\Omega_\zeta )  \big);
\end{align*}
\item[(b)]  equations \eqref{rhoEquAlone}--\eqref{soluteSubPro} are satisfied a.e. in spacetime with $\rho(0)=\rho_0$ and $\bT(0)=\bT_0$ a.e. in $\Omega_{\zeta(0)}$.
\end{itemize}
\end{definition}
\noindent We now formulate our result on the existence of a unique strong solution of \eqref{rhoEquAlone}-\eqref{bddCondSolvAlone}.
\begin{theorem}\label{thm:mainFP}
For a dataset  $(\rho_0,\bT_0, \bv, \zeta)$ satisfying  \eqref{fokkerPlanckDataAlone}, there exists a unique strong solution  $(\rho,\bT)$ of  \eqref{rhoEquAlone}-\eqref{bddCondSolvAlone}, in the sense of Definition \ref{def:strsolmartFP},
 such that
\begin{equation}
\begin{aligned} 
\int_I\big(\Vert \partial_t \rho\Vert_{L^2(\Ozeta)}^2
&+
\Vert \partial_t \bT\Vert_{L^2(\Ozeta)}^2
\big)\dt
+
\sup_{t\in I} \big(\Vert \rho(t)\Vert_{W^{1,2}(\Ozeta)}^2
+
\Vert \bT(t)\Vert_{W^{1,2}(\Ozeta)}^2
\big)
\\&
+\int_I\big(\Vert  \rho\Vert_{W^{2,2}(\Ozeta)}
+
\Vert  \bT\Vert_{W^{2,2}(\Ozeta)}\big)
\\&\lesssim
\Vert  \rho_0\Vert_{W^{1,2}(\Omega_{\zeta(0)})}
+
\Vert  \bT_0\Vert_{W^{1,2}(\Omega_{\zeta(0)})}
+
\int_I\big(\Vert\partial_t\zeta  \Vert_{W^{2,2}(\omega)}^2+\Vert\bv\Vert_{W^{2,2}(\Ozeta)}^2\big)\dt
\label{eq:thm:mainFP}
\end{aligned}
\end{equation}  
holds.
\end{theorem}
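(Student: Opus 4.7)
The plan is to exploit the fact that the system \eqref{rhoEquAlone}--\eqref{bddCondSolvAlone} is partially decoupled: the equation for $\rho$ is a linear advection-diffusion equation independent of $\bT$, while the equation for $\bT$ is linear in $\bT$ with a source term involving $\rho$. I would therefore construct the strong solution in two sequential stages -- first $\rho$, then $\bT$ with $\rho$ already known -- and for each stage employ the same scheme: reduce to a fixed reference configuration via the Hanzawa transform $\bm{\Psi}_\zeta$, construct an approximate solution via a Galerkin method on the eigenbasis of the Neumann-Laplacian on $\Omega$, derive energy estimates uniform in the approximation parameter, and pass to the limit. The bounds \eqref{210and212}--\eqref{218} ensure that the pulled-back coefficients are as regular as the incoming data $\zeta$ and that the transformed equation is uniformly parabolic in space, provided $\|\zeta\|_{L^\infty(I\times\omega)}<L$.

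For the $\rho$-equation, the basic $L^2$-estimate is obtained by testing with $\rho$ itself: the transport term $\int_{\Ozeta}(\bv\cdot\nabx)\rho\,\rho\dx$ cancels the convective contribution from Reynolds' transport theorem because $\bv\cdot\bn_\zeta$ equals the interface speed on $\partial\Ozeta$ by \eqref{interfaceAlone}. Bootstrapping to $W^{1,2}$-regularity proceeds by testing with $-\Delta_{\mathbf{x}}\rho$, which yields an identity of the form
\begin{equation*}
\frac{1}{2}\frac{\dd}{\dt}\int_{\Ozeta}|\nabx\rho|^2\dx+\int_{\Ozeta}|\Delta_{\mathbf{x}}\rho|^2\dx=\int_{\Ozeta}(\bv\cdot\nabx)\rho\,\Delta_{\mathbf{x}}\rho\dx+\mathcal{B}(t),
\end{equation*}
where $\mathcal{B}(t)$ collects boundary contributions generated by the motion of $\partial\Ozeta$ and is controlled by $\|\partial_t\zeta\|_{W^{2,2}(\omega)}^2\|\nabx\rho\|_{L^2(\Ozeta)}^2$ after pulling back to $\Omega$. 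Using the embedding $W^{2,2}(\Ozeta)\hookrightarrow L^\infty(\Ozeta)$ valid in three dimensions to estimate the transport term by $\|\bv\|_{L^\infty}\|\nabx\rho\|_{L^2}\|\Delta_{\mathbf{x}}\rho\|_{L^2}$, absorbing, and closing by Gronwall against the control $\|\bv\|_{L^2(I;W^{2,2})}^2+\|\partial_t\zeta\|_{L^2(I;W^{2,2})}^2$, gives $\rho\in L^\infty(I;W^{1,2})\cap L^2(I;W^{2,2})$; the bound $\partial_t\rho\in L^2(I;L^2)$ then follows by rearranging \eqref{rhoEquAlone}.

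Once $\rho$ is in hand, the equation \eqref{soluteSubPro} is solved by the same sequence of steps. The two new features are the stretching term $(\nabx\bv)\bT+\bT(\nabx\bv)^\top$, which is handled by the $L^\infty$-embedding of $\nabx\bv\in W^{1,2}(\Ozeta)$ combined with Gronwall, and the source $\rho\,\mathbb{I}$, which is controlled by the estimate for $\rho$ that was just established. Uniqueness of $(\rho,\bT)$ follows by testing the difference of two solutions sharing the same data against themselves and applying Gronwall; because both equations are linear in their unknown, no smallness assumption is needed. The main obstacle throughout is the careful bookkeeping of boundary and commutator contributions produced by the time-dependence of $\Ozeta$ when testing with $\Delta_{\mathbf{x}}$-type multipliers: reconciling Reynolds' transport with the Neumann conditions \eqref{bddCondSolvAlone} and the kinematic constraint \eqref{interfaceAlone} is what allows the integration-by-parts identities above to close with only the norms of $\bv$ and $\zeta$ appearing on the right-hand side of \eqref{eq:thm:mainFP}.
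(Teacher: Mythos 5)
Your overall strategy (reduce to a priori energy estimates plus a Galerkin construction, treat $\rho$ first and then feed it into the linear equation for $\bT$, test with the solution, with $-\Delta_{\mathbf x}$, and recover $\partial_t$ from the equation) is essentially the route the paper takes. However, there is a genuine gap in the step where you handle the stretching term $(\nabx\bv)\bT+\bT(\nabx\bv)^\top$: you invoke ``the $L^\infty$-embedding of $\nabx\bv\in W^{1,2}(\Ozeta)$,'' but in three space dimensions $W^{1,2}(\Ozeta)$ embeds only into $L^6(\Ozeta)$, not into $L^\infty(\Ozeta)$. Since the data only give $\bv\in L^2(I;W^{2,2}(\Ozeta))$, i.e. $\nabx\bv\in L^2(I;W^{1,2}(\Ozeta))$, you do not have $\nabx\bv$ in any $L^p(I;L^\infty)$ space, and the Gr\"onwall closure as you describe it fails both in the $L^2$-estimate and in the estimate against $\Delta_{\mathbf x}\bT$ (and implicitly in your uniqueness argument, which meets the same term). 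The missing idea is the interpolation/absorption device used in the paper: for the $L^2$-level one writes
\begin{equation*}
\int_{\Ozeta}\big[(\nabx\bv)\bT+\bT(\nabx\bv)^\top\big]:\bT\dx
\lesssim \Vert\nabx\bv\Vert_{L^2(\Ozeta)}\Vert\bT\Vert_{L^6(\Ozeta)}\Vert\bT\Vert_{L^3(\Ozeta)}
\lesssim \Vert\nabx\bv\Vert_{L^2(\Ozeta)}\Vert\bT\Vert_{W^{1,2}(\Ozeta)}^{3/2}\Vert\bT\Vert_{L^2(\Ozeta)}^{1/2},
\end{equation*}
absorbs the $\Vert\nabx\bT\Vert_{L^2}^2$ piece into the dissipation and runs Gr\"onwall with the integrable coefficient $\Vert\nabx\bv\Vert_{L^2(\Ozeta)}^4$ (integrable by interpolating $\bv$ between $L^2$ and $W^{2,2}$); at the $\Delta_{\mathbf x}\bT$-level one uses a Ladyzhenskaya-type splitting $\Vert\nabx\bv\Vert_{L^2}^{1/4}\Vert\bv\Vert_{W^{2,2}}^{3/4}\Vert\bT\Vert_{L^2}^{1/4}\Vert\nabx\bT\Vert_{L^2}^{3/4}\Vert\Delta_{\mathbf x}\bT\Vert_{L^2}$, again absorbing into $\Vert\Delta_{\mathbf x}\bT\Vert_{L^2}^2$. (Your treatment of the transport term in the $\rho$-equation via $W^{2,2}\hookrightarrow L^\infty$ for $\bv$ itself is fine; the problem is only with $\nabx\bv$.)

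A smaller imprecision: the boundary contribution $\mathcal B(t)$ produced when testing with $-\Delta_{\mathbf x}\rho$ (and likewise with $-\Delta_{\mathbf x}\bT$ and with $\partial_t\rho$, $\partial_t\bT$) is not controlled by $\Vert\partial_t\zeta\Vert_{W^{2,2}(\omega)}^2\Vert\nabx\rho\Vert_{L^2(\Ozeta)}^2$ alone. Since $|\nabx\rho|^2$ must be traced on $\partial\Ozeta$, one needs the trace theorem together with interpolation, which produces a term of the form $\delta\Vert\rho\Vert_{W^{2,2}(\Ozeta)}^2$ that has to be absorbed into the dissipation, plus a Gr\"onwall term $c(\delta)\Vert\partial_t\zeta\Vert_{W^{2,2}(\omega)}^2\Vert\nabx\rho\Vert_{L^2(\Ozeta)}^2$. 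As stated, your bound on $\mathcal B(t)$ skips this absorption step, which is exactly where the $W^{2,2}$-in-space dissipation is needed; once these two points are repaired the argument closes and coincides with the paper's proof.
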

\noindent 
Since \eqref{rhoEquAlone} and \eqref{soluteSubPro} are dissipative and bilinear,  a strong solution of  \eqref{rhoEquAlone}-\eqref{bddCondSolvAlone}, and in particular the bound \eqref{eq:thm:mainFP},  is directly obtained by way of a limit to a Galerkin approximation. We therefore proceed with just the formal computation.
\begin{proof}
First of all, if we test \eqref{rhoEquAlone} with $\rho$ and use Reynold's transport theorem and the boundary condition, we obtain,
\begin{align*}
\frac{1}{2}
\int_I  \frac{\mathrm{d}}{\dt}
\Vert\rho \Vert_{L^2(\Ozeta)}^2 \dt 
+
\int_I \Vert\nabx\rho\Vert_{L^2(\Ozeta)}^2\dt
&= 
0
\end{align*}
so that
\begin{align*}
\frac{1}{2}
\sup_{t\in I} 
\Vert\rho(t) \Vert_{L^2(\Ozeta)}^2 
+
\int_I \Vert\nabx\rho\Vert_{L^2(\Ozeta)}^2\dt
&= 
\frac{1}{2}
\Vert\rho_0\Vert_{L^2(\Omega_{\zeta(0)})}^2.
\end{align*}
If we also test \eqref{soluteSubPro} with  $\bT$, we obtain  
\begin{align*}
\frac{1}{2}
\int_I  \frac{\mathrm{d}}{\dt}
\Vert\bT \Vert_{L^2(\Ozeta)}^2 \dt 
+
\int_I\big(\Vert\nabx\bT\Vert_{L^2(\Ozeta)}^2
+
2 \Vert\bT\Vert_{L^2(\Ozeta)}^2\big)\dt
&= 
\int_I\int_{\Ozeta}
[(\nabx \bv )\bT  + \bT (\nabx \bv )^\top]:\bT \dx\dt
\\&
+2\int_I\int_{\Ozeta} \rho  \mathrm{tr}(\bT )\dx\dt.
\end{align*}
On one hand, we have
\begin{align*}
2\int_I\int_{\Ozeta} \rho  \mathrm{tr}(\bT )\dx\dt
\leq 
\int_I\big(\Vert\rho\Vert_{L^2(\Ozeta)}^2 
+
 \Vert\mathrm{tr}(\bT)\Vert_{L^2(\Ozeta)}^2\big) \dt
\leq
T\Vert\rho_0\Vert_{L^2(\Omega_{\zeta(0)})}^2
+
\int_I\Vert \bT \Vert_{L^2(\Ozeta)}^2 \dt
\end{align*}
and on the other hand, by interpolation 
\begin{align*}
\int_I\int_{\Ozeta}
[(\nabx \bv )\bT  + \bT (\nabx \bv )^\top]:\bT \dx\dt
&
\lesssim
\int_I 
\Vert\nabx \bv\Vert_{L^2(\Ozeta)} \Vert\bT\Vert_{L^{6}(\Ozeta)}
\Vert\bT \Vert_{L^{3}(\Ozeta)}\dt
\\& 
\lesssim
\int_I 
\Vert\nabx \bv\Vert_{L^2(\Ozeta)}
\Vert\nabx \bT \Vert_{L^2(\Ozeta)}^{3/2}
\Vert\bT \Vert_{L^{2}(\Ozeta)}^{1/2}\dt 
\\
&\leq  
\delta 
\int_I  
\Vert\nabx \bT \Vert_{L^2(\Ozeta)}^2\dt 
+
c
\int_I  
\Vert\nabx \bv \Vert_{L^2(\Ozeta)}^4
\Vert\bT \Vert_{L^{2}(\Ozeta)}^2\dt  
\end{align*} 
holds for any $\delta>0$.
Thus, it follows from Gr\"onwall's lemma that
\begin{equation}
\begin{aligned}
\label{strgEstFP3}  
\sup_{t\in I}&
\Vert\bT(t)\Vert_{L^2(\Ozeta)}^2 
+\int_I\big( 
\Vert\nabx \bT \Vert_{L^2(\Ozeta)}^2
+
\Vert\bT\Vert_{L^2(\Ozeta)}^2
\big)\dt
\\&
\lesssim  
e^{\int_I  
\Vert\nabx \bv \Vert_{L^2(\Ozeta)}^4\dt}
\bigg[
\Vert\bT_0\Vert_{L^2(\Omega_{\zeta(0)})}^2
+
T \Vert\rho_0  \Vert_{L^2(\Omega_{\zeta(0)})}^2 
\bigg] 
\\&
\lesssim  
e^{\Vert\bv \Vert_{W^{1,2}(I;L^2(\Ozeta))}^2
\int_I  
\Vert\bv \Vert_{W^{2,2}(\Ozeta)}^2\dt}
\bigg[
\Vert\bT_0\Vert_{L^2(\Omega_{\zeta(0)})}^2
+
T \Vert\rho_0  \Vert_{L^2(\Omega_{\zeta(0)})}^2 
\bigg] 
\end{aligned}
\end{equation}
since by interpolation
\begin{align*}
\int_I  
\Vert\nabx \bv \Vert_{L^2(\Ozeta)}^4\dt 
\lesssim
\int_I  
\Vert\bv \Vert_{L^2(\Ozeta)}^2
\Vert\bv \Vert_{W^{2,2}(\Ozeta)}^2\dt 
\lesssim 
\Vert\bv \Vert_{W^{1,2}(I;L^2(\Ozeta))}^2
\int_I  
\Vert\bv \Vert_{W^{2,2}(\Ozeta)}^2\dt.
\end{align*}
We now test \eqref{rhoEquAlone}  with $\Delx \rho$. This yields
\begin{equation}
\begin{aligned}  
\label{spaceRegRho0}
\int_I  \frac{\mathrm{d}}{\dt}
\Vert\nabx \rho\Vert_{L^2(\Ozeta)}^2 \dt 
+
\int_I\Vert\Delx \rho\Vert_{L^2(\Ozeta)}^2\dt
&=
\int_I
\int_{\Ozeta}((\bv\cdot \nabx) \rho)\Delx\rho \dx\dt
\\&+
\frac{1}{2}
\int_I
\int_{\partial\Ozeta}(\partial_t\zeta\bn)\circ\bm{\varphi}_\zeta^{-1}\cdot\bn_\zeta \vert\nabx\rho\vert^2 \dd\mathcal{H}^2\dt
\\&=:I_1+I_2
\end{aligned}
\end{equation}
where
\begin{align*}
I_1\leq \delta
\int_I\Vert\Delx \rho\Vert_{L^2(\Ozeta)}^2\dt
+
c(\delta)
\int_I\Vert\bv\Vert_{W^{2,2}(\Ozeta)}^2\Vert\nabx \rho \Vert_{L^2(\Ozeta)}^2  \dt
\end{align*}
for any $\delta>0$. Now note that by interpolation and the trace theorem, 
\begin{align*}
\Vert\nabx \rho \Vert_{L^2(\partial\Ozeta)}^2
&\lesssim
\Vert\nabx \rho \Vert_{W^{1,2}(\Ozeta)}
\Vert\nabx \rho \Vert_{W^{1/4,2}(\partial\Ozeta)}
\\
&\lesssim
\Vert\nabx \rho \Vert_{W^{1,2}(\Ozeta)}
\Vert\nabx \rho \Vert_{W^{3/4,2}(\Ozeta)}
\\
&\lesssim
\Vert\nabx \rho \Vert_{W^{1,2}(\Ozeta)}
\Vert\nabx \rho \Vert_{L^{2}(\Ozeta)}^{1/4}
\Vert\nabx \rho \Vert_{W^{1,2}(\Ozeta)}^{3/4}.
\end{align*}
Thus by using $\zeta\in L^\infty(I;W^{1,\infty}(\omega))$ and $\partial_t\zeta\in L^\infty(I;W^{1,2}(\omega))$, we also obtain
\begin{align*}
I_2&\lesssim
\int_I\Vert\nabx \rho \Vert_{L^2(\partial\Ozeta)}^2\Vert(\partial_t\zeta\bn)\circ\bm{\varphi}_\zeta^{-1}\cdot\bn_\zeta  \Vert_{L^\infty(\partial\Ozeta)} \dt
\\
&\lesssim
\int_I\Vert  \rho \Vert_{W^{2,2}(\Ozeta)}^{7/4}  \Vert\nabx \rho \Vert_{L^{2}(\Ozeta)}^{1/4}
\Vert\naby\zeta  \Vert_{L^{\infty}(\omega)}
\Vert\partial_t\zeta  \Vert_{W^{5/4,2}(\omega)} \dt
\\
&\lesssim 
\int_I\Vert  \rho \Vert_{W^{2,2}(\Ozeta)}^{7/4}  \Vert\nabx \rho \Vert_{L^{2}(\Ozeta)}^{1/4}
\Vert\partial_t\zeta  \Vert_{W^{1,2}(\omega)} ^{3/4}
\Vert\partial_t\zeta  \Vert_{W^{2,2}(\omega)}^{1/4}  \dt
\\
&\leq
\delta
\int_I\Vert  \rho \Vert_{W^{2,2}(\Ozeta)}^2+c(\delta)\int_I  \Vert\nabx \rho \Vert_{L^{2}(\Ozeta)}^2\Vert\partial_t\zeta  \Vert_{W^{2,2}(\omega)}^2  \dt
\end{align*}
or any $\delta>0$.
Consequently, it follows from Gr\"onwall's lemma that
\begin{equation}
\begin{aligned}
\label{spaceRegRho1}
\sup_{t\in I}\Vert \nabx\rho(t)\Vert_{L^2(\Ozeta)}&+\int_I\Vert \Delx\rho\Vert_{L^2(\Ozeta)}\dt
\\&\lesssim
\Vert \nabx\rho_0\Vert_{L^2(\Omega_{\zeta(0)})}
+
\int_I\big(\Vert\partial_t\zeta  \Vert_{W^{2,2}(\omega)}^2+\Vert\bv\Vert_{W^{2,2}(\Ozeta)}^2\big)\dt.
\end{aligned}
\end{equation}
If we also test \eqref{soluteSubPro} with $\Delx \bT$, we obtain 
\begin{equation}
\begin{aligned}  
\label{spaceRegT0}
\int_I  \frac{\mathrm{d}}{\dt}
\Vert\nabx \bT\Vert_{L^2(\Ozeta)}^2\dt 
+
\int_I\Vert\Delx \bT\Vert_{L^2(\Ozeta)}^2\dt
&=
\int_I
\int_{\Ozeta}((\bv\cdot \nabx) \bT)\Delx\bT \dx\dt
\\&+
\frac{1}{2}
\int_I
\int_{\partial\Ozeta}(\partial_t\zeta\bn)\circ\bm{\varphi}_\zeta^{-1}\cdot\bn_\zeta \vert\nabx\bT\vert^2 \dd\mathcal{H}^2\dt
\\&+
2\int_I
\int_{\Ozeta}(\bT-\rho\mathbb{I})\Delx\bT \dx\dt
\\&
-\int_I
\int_{\Ozeta}((\nabx \bv)\bT + \bT(\nabx \bv)^\top)\Delx\bT \dx\dt
\\&=:J_1+J_2+J_3+J_4.
\end{aligned}
\end{equation}
The terms $J_1$ and $J_2$ can be treated as $I_1$ and $I_2$ above. By using \eqref{strgEstFP3}, we obtain
\begin{align*}
J_3\leq &\delta
\int_I\Vert\Delx \bT\Vert_{L^2(\Ozeta)}^2\dt
+
cT\Big[ \Vert\bT_0\Vert_{L^2(\Omega_{\zeta(0)})}^2
+
T \Vert\rho_0  \Vert_{L^2(\Omega_{\zeta(0)})}^2 \Big] 
\end{align*}
for any $\delta>0$ and by Ladyzhenskaya's inequality and  \eqref{strgEstFP3},
\begin{align*}
J_4&\lesssim
\int_I  
\Vert\nabx \bv\Vert_{L^2(\Ozeta)}^{1/4}\Vert \bv\Vert_{W^{2,2}(\Ozeta)}^{3/4}\Vert \bT\Vert_{L^2(\Ozeta)}^{1/4}\Vert\nabx \bT\Vert_{L^2(\Ozeta)}^{3/4}\Vert\Delx \bT\Vert_{L^2(\Ozeta)}
\dt
\\
\leq &\delta
\int_I\Vert\Delx \bT\Vert_{L^2(\Ozeta)}^2\dt
+
c\int_I \Vert \bv\Vert_{W^{2,2}(\Ozeta)}^2\Vert\nabx \bT\Vert_{L^2(\Ozeta)}^2
\dt
\\&+
c\sup_{t\in I} \Vert \bT(t)\Vert_{L^{2}(\Ozeta)}^2\int_I\Vert\nabx \bv\Vert_{L^2(\Ozeta)}^2
\dt
\end{align*}
where by \eqref{strgEstFP3} and the regularity of the dataset,
\begin{align*}
\sup_{t\in I} \Vert \bT\Vert_{L^{2}(\Ozeta)}^2
&\int_I\Vert\nabx \bv\Vert_{L^2(\Ozeta)}^2
\dt
\lesssim
\sup_{t\in I} \Vert \bT\Vert_{L^{2}(\Ozeta)}^2
\\&
\lesssim
 \Vert\bT_0\Vert_{L^2(\Omega_{\zeta(0)})}^2
+
T \Vert\rho_0  \Vert_{L^2(\Omega_{\zeta(0)})}^2.
\end{align*}
Thus, it follows from Gr\"onwall's lemma that
\begin{equation}
\begin{aligned}
\label{spaceRegT1}
\sup_{t\in I}&\Vert \nabx\bT(t)\Vert_{L^2(\Ozeta)}
+
\int_I\Vert \Delx\bT\Vert_{L^2(\Ozeta)}\dt
\\&\lesssim
\Vert \nabx\bT_0\Vert_{L^2(\Omega_{\zeta(0)})}
+
\int_I\big(\Vert\partial_t\zeta  \Vert_{W^{2,2}(\omega)}^2+\Vert\bv\Vert_{W^{2,2}(\Ozeta)}^2\big)\dt  
\\&\quad
+
\Vert\bT_0\Vert_{L^2(\Omega_{\zeta(0)})}^2
+
T \Vert\rho_0  \Vert_{L^2(\Omega_{\zeta(0)})}^2  
.
\end{aligned}
\end{equation}
To obtain regularity in time, we test \eqref{rhoEqu}  with $\partial_t \rho$. This yields
\begin{equation}
\begin{aligned}
\label{timeRegRho0}
\int_I\Vert \partial_t \rho\Vert_{L^2(\Ozeta)}^2\dt
&+
\int_I\frac{\dd}{\dt}\Vert \nabx \rho\Vert_{L^2(\Ozeta)}^2\dt
\\&=
\frac{1}{2}
\int_I
\int_{\partial\Ozeta}(\partial_t\zeta\bn)\circ\bm{\varphi}_\zeta^{-1}\cdot\bn_\zeta \vert\nabx\rho\vert^2 \dd\mathcal{H}^2\dt
\\&
-\int_I\int_{\Ozeta}(\bv\cdot \nabx) \rho\partial_t\rho\dx\dt
\\
&\leq
c
\int_I\Vert  \rho \Vert_{W^{2,2}(\Ozeta)}^2
+
c\int_I  \Vert\nabx \rho \Vert_{L^{2}(\Ozeta)}^2\Vert\partial_t\zeta  \Vert_{W^{2,2}(\omega)}^2  \dt
\\
&+c(\delta)\int_I \Vert\nabx \rho \Vert_{L^{2}(\Ozeta)}^2 \Vert\bv \Vert_{W^{2,2}(\Ozeta)}^2  \dt
+
\delta
\int_I\Vert  \partial_t\rho \Vert_{L^{2}(\Ozeta)}^2
\end{aligned}
\end{equation}
for any $\delta>0$. Note the estimate for the boundary term done earlier in \eqref{spaceRegRho0}.
By using \eqref{spaceRegRho1}, it follows from \eqref{timeRegRho0} that
\begin{equation}
\begin{aligned}
\label{timeRegRho1}
\int_I\Vert \partial_t \rho\Vert_{L^2(\Ozeta)}^2\dt
&+
\sup_{t\in I} \Vert \nabx \rho(t)\Vert_{L^2(\Ozeta)}^2
\\&\lesssim
\Vert \nabx\rho_0\Vert_{L^2(\Omega_{\zeta(0)})}
+
\int_I\big(\Vert\partial_t\zeta  \Vert_{W^{2,2}(\omega)}^2+\Vert\bv\Vert_{W^{2,2}(\Ozeta)}^2\big)\dt.
\end{aligned}
\end{equation}
Now, we note that  (compare with the estimate for $J_3$ in \eqref{spaceRegT0})
\begin{align*}
\int_I\int_{\Ozeta}(\bT - &\rho \mathbb{I}):\partial_t\bT\dx\dt
\leq \delta
\int_I\Vert\partial_t \bT\Vert_{L^2(\Ozeta)}^2\dt
 +
cT\Big[  \Vert\bT_0\Vert_{L^2(\Omega_{\zeta(0)})}^2
+
T \Vert\rho_0  \Vert_{L^2(\Omega_{\zeta(0)})}^2    \Big]
\end{align*}
and (compare with the estimate for $J_4$ in \eqref{spaceRegT0})
\begin{align*}
\int_I\int_{\Ozeta}
[(\nabx \bv)\bT &+ \bT(\nabx \bv)^\top]:\partial_t\bT\dx\dt
\\&\leq \delta
\int_I\Vert\partial_t \bT\Vert_{L^2(\Ozeta)}^2\dt
+
c\int_I \Vert \bv\Vert_{W^{2,2}(\Ozeta)}^2\Vert\nabx \bT\Vert_{L^2(\Ozeta)}^2 
\\&
+
 \Vert\bT_0\Vert_{L^2(\Omega_{\zeta(0)})}^2
+
T \Vert\rho_0  \Vert_{L^2(\Omega_{\zeta(0)})}^2  
.
\end{align*}
Therefore, by testing \eqref{solute} with $\partial_t\bT$, we obtain
\begin{equation}
\begin{aligned}
\label{timeRegT1}
\int_I\Vert \partial_t \bT\Vert_{L^2(\Ozeta)}^2\dt
&+
\sup_{t\in I} \Vert \nabx \bT(t)\Vert_{L^2(\Ozeta)}^2
\\&\lesssim
\Vert \nabx\bT_0\Vert_{L^2(\Omega_{\zeta(0)})}
+
\int_I\big(\Vert\partial_t\zeta  \Vert_{W^{2,2}(\omega)}^2+\Vert\bv\Vert_{W^{2,2}(\Ozeta)}^2\big)\dt 
\\&\quad
+
 \Vert\bT_0\Vert_{L^2(\Omega_{\zeta(0)})}^2
+
T \Vert\rho_0  \Vert_{L^2(\Omega_{\zeta(0)})}^2  
.
\end{aligned}
\end{equation}
If we now combine \eqref{spaceRegRho1}, \eqref{spaceRegT1} \eqref{timeRegRho1} and \eqref{timeRegT1}, we obtain the estimate
\begin{equation}
\begin{aligned} \nonumber
\int_I\big(\Vert \partial_t \rho\Vert_{L^2(\Ozeta)}^2
&+
\Vert \partial_t \bT\Vert_{L^2(\Ozeta)}^2
\big)\dt
+
\sup_{t\in I} \big(\Vert \rho(t)\Vert_{W^{1,2}(\Ozeta)}^2
+
\Vert \bT(t)\Vert_{W^{1,2}(\Ozeta)}^2
\big)
\\&
+\int_I\big(\Vert  \rho\Vert_{W^{2,2}(\Ozeta)}
+
\Vert  \bT\Vert_{W^{2,2}(\Ozeta)}\big)
\\&\lesssim
\Vert  \rho_0\Vert_{W^{1,2}(\Omega_{\zeta(0)})}
+
\Vert  \bT_0\Vert_{W^{1,2}(\Omega_{\zeta(0)})}
+
\int_I\big(\Vert\partial_t\zeta  \Vert_{W^{2,2}(\omega)}^2+\Vert\bv\Vert_{W^{2,2}(\Ozeta)}^2\big)\dt
.
\end{aligned}
\end{equation}  
as desired.
\end{proof} 

\section{Local strong solution via Fixed-Point}
To construct a local-in-time solution for the fully coupled system, we resort to a fixed-point argument. The existence of such a fixed-point will follow from closedness and contraction properties for a suitable map. More precisely, to show the closedness property, we consider the mapping $\mathtt{T}=\mathtt{T}_1\circ\mathtt{T}_2$ with
\begin{align*}
\mathtt{T}(\underline{\rho}, \underline{\bT})=({\rho}, {\bT}), \qquad \mathtt{T}_2(\underline{\rho}, \underline{\bT})=(\eta,\bu,p), \qquad \mathtt{T}_1(\eta,\bu,p)=( {\rho}, {\bT})
\end{align*}
defined on the space
\begin{align*}
X_\eta&:=W^{1,2} (I_*;L^{2}(\Oeta)  )
\cap
L^\infty (I_*;W^{1,2}(\Oeta)  )
\cap L^{2} (I_*;W^{2,2}(\Oeta)  ), 
\end{align*}
equipped with its canonical norm $\Vert \cdot\Vert_{X_\eta}$ and where $I_*$ is the local time yet to be determined. We now consider the subset
\begin{align*}
B_R:=\big\{  (\underline{\rho}, \underline{\bT})\in X_\eta \otimes X_\eta \,:\,\Vert (\underline{\rho}, \underline{\bT})\Vert_{X_\eta \otimes X_\eta} \leq R  \big\}.
\end{align*}
and show that $\mathtt{T}:X_\eta\otimes X_\eta\rightarrow X_\eta\otimes X_\eta$ maps $B_R$ into $B_R$, i.e., for any $(\underline{\rho}, \underline{\bT})\in   B_R$, we have that 
\begin{align*}
\Vert (\rho, \bT) \Vert_{X_\eta \otimes X_\eta}^2=\Vert \mathtt{T}(\underline{\rho}, \underline{\bT})\Vert_{X_\eta \otimes X_\eta}^2=\Vert \mathtt{T}_1\circ \mathtt{T}_2(\underline{\rho}, \underline{\bT})\Vert_{X_\eta\otimes X_\eta}^2=\Vert \mathtt{T}_1(\eta,\bu, p) \Vert_{X_\eta\otimes X_\eta}^2 \leq R^2.
\end{align*}
Indeed, for $ \underline{\bT}\in X_\eta$,  if we let $( \eta, \bu,  p )$ be the (conditional) unique strong solution  of  \eqref{divfreeAlone}--\eqref{interfaceAlone} with data $(\bff, g, \eta_0, \eta_\star, \bu_0, \underline{\bT})$ as shown in  Theorem \ref{thm:BMSS}, then by \cite[(4.5)]{BMSS},  $( \eta, \bu,  p )$ satisfies 
\begin{equation}
\begin{aligned} 
\int_{I_*}\big(\Vert\partial_t\eta  \Vert_{W^{2,2}(\omega)}^2+\Vert\bu\Vert_{W^{2,2}(\Oeta)}^2\big)\dt
&\lesssim
\Vert \bu_0 \Vert_{W^{1,2}(\eta_0)}^2+ \Vert\eta_0\Vert_{W^{3,2}(\omega)}^2+ \Vert \eta_\star\Vert_{W^{1,2}(\omega)}^2
\\& 
+\int_{I_*}\big(\Vert g\Vert_{L^{2}(\omega)}^2
+
\Vert \bff\Vert_{L^{2}(\Oeta)}^2
+
\Vert \underline{\bT}\Vert_{W^{1,2}(\Oeta)}^2\big)\dt.
\label{eq:thm:mainFPxy}
\end{aligned}
\end{equation} 
On the other hand, for 
\begin{align*}
(\eta,\bu)\in&  
W^{1,\infty}\big(I_*;W^{1,2}(\omega)  \big)\cap L^{\infty}\big(I_*;W^{3,2}(\omega)  \big) \cap  W^{1,2}\big(I_*; W^{2,2}(\omega)  \big)\cap  W^{2,2}\big(I_*;L^{2}(\omega)  \big)
\cap  L^2\big(I;W^{4,2}(\omega)  \big)
\\&\otimes
W^{1,2} \big(I_*; L^2_{\divx}(\Oeta ) \big)\cap L^2\big(I_*;W^{2,2}(\Oeta )  \big), 
\end{align*}
let $(\rho,\bT)$ be the unique strong solution of \eqref{rhoEquAlone}-\eqref{bddCondSolvAlone} with dataset $(\rho_0,\bT_0, \bu, \eta)$ as shown in Theorem \ref{thm:mainFP}. 
As shown in \eqref{eq:thm:mainFP}, $(\rho,\bT)$ will satisfy
\begin{equation}
\begin{aligned} 
\Vert (\rho, \bT) \Vert_{X_\eta \otimes X_\eta}^2
&\lesssim
\Vert  \rho_0\Vert_{W^{1,2}(\Omega_{\eta_0})}
+
\Vert  \bT_0\Vert_{W^{1,2}(\Omega_{\eta_0})}
+
\int_{I_*}\big(\Vert\partial_t\eta  \Vert_{W^{2,2}(\omega)}^2+\Vert\bu\Vert_{W^{2,2}(\Oeta)}^2\big)\dt
.
\label{eq:thm:mainFPx}
\end{aligned}
\end{equation} 
Given the regularity of  the dataset and the fact that $ \underline{\bT}\in X_\eta$, for a large enough $R>0$ and $T_*>0$ small enough, we obtain by substituting \eqref{eq:thm:mainFPxy} into \eqref{eq:thm:mainFPx},
\begin{align*}
\Vert (\rho, \bT) \Vert_{X_\eta \otimes X_\eta}^2\leq R^2
\end{align*}
by substituting \eqref{eq:thm:mainFPxy} into \eqref{eq:thm:mainFPx}. Thus $\mathtt{T}:B_R\rightarrow B_R$.
\\
Now, for the contraction property, we consider the superset $Y_\eta \supseteq X_\eta$ defined by 
\begin{align*} 
Y_\eta&:=  L^{\infty} (I_*;L^2(\Oeta ) )
\cap 
L^2(I_*;W^{1,2}(\Oeta) ),
\end{align*}
and equipped with its canonical norm $\Vert \cdot\Vert_{Y_\eta}$, and show that
\begin{equation}
\begin{aligned}
\label{contrEst00}
\Vert \mathtt{T}( \underline{\rho}_1- \underline{\rho}_2,  \underline{\bT}_1-\underline{\bT}_2) \Vert_{Y_{\eta_1}\otimes Y_{\eta_1}}^2
&\leq
\tfrac{1}{2}
\Vert ( \underline{\rho}_1- \underline{\rho}_2,  \underline{\bT}_1-\underline{\bT}_2)\Vert_{Y_{\eta_1}\otimes Y_{\eta_1}}^2.
\end{aligned}
\end{equation} 
hold for any two pair of strong solutions $(\underline{\rho}_i, \underline{\bT}_i)\in X_{\eta_1}\otimes X_{\eta_1}$, $i=1,2$ for the solute subproblem \eqref{rhoEquAlone}-\eqref{bddCondSolvAlone}
with dataset $(\rho_0,\bT_0, \bu_i, \eta_i)$, $i=1,2$, respectively. To show \eqref{contrEst00}, we note that since the fluid domain depends on the deformation of the shell, we have to transform one solution, say $\bT_2$, to the domain of $\bT_1$ to get a difference estimate. For this, we set  $\overline{\bT}_2=\bT_2\circ \bm{\Psi}_{\eta_2-\eta_1}$, $\overline{\bu}_2=\bu_2\circ \bm{\Psi}_{\eta_2-\eta_1}$, $\overline{\rho}_2=\rho_2\circ \bm{\Psi}_{\eta_2-\eta_1}$ and obtain (see \cite{mensah2023weak} for further details)
\begin{align*}
\partial_t\overline{\bT}_2
+
\overline{\bu}_2 \cdot\nabx \overline{\bT}_2 
&=
\nabx\overline{\bu}_2\overline{\bT}_2
+
\overline{\bT}_2 (\nabx\overline{\bu}_2)^\top
-
2
(\overline{\bT}_2-\overline{\rho}_2\mathbb{I})
+
\Delx\overline{\bT}_2
\\&
-
\divx(\nabx \overline{\bT}_2(\mathbb{I}-\mathbb{A}_{\eta_2-\eta_1}))
+
\mathbb{H}_{\eta_2-\eta_1}(\overline{\rho}_2,\overline{\bu}_2,\overline{\bT}_2)
\end{align*}
defined on ${I_*} \times \Omega_{\eta_1}$ where
\begin{align*}
\mathbb{H}_{\eta_2-\eta_1}(\overline{\rho}_2,\overline{\bu}_2,\overline{\bT}_2)
&=
(1-J_{\eta_2-\eta_1})\partial_t\overline{\bT}_2
-
J_{\eta_2-\eta_1} \nabx \overline{\bT}_2\cdot\partial_t\bm{\Psi}_{\eta_2-\eta_1}^{-1}\circ \bm{\Psi}_{\eta_2-\eta_1}
+
\nabx\overline{\bu}_2(\mathbb{B}_{\eta_2-\eta_1}-\mathbb{I})\overline{\bT}_2
\\&+
\overline{\bT}_2(\mathbb{B}_{\eta_2-\eta_1}-\mathbb{I})^\top (\nabx\overline{\bu}_2)^\top
+
\overline{\bu}_2 \cdot\nabx \overline{\bT}_2 (\mathbb{I}-\mathbb{B}_{\eta_2-\eta_1})
+
2
(1-J_{\eta_2-\eta_1}) (\overline{\bT}_2-\overline{\rho}_2\mathbb{I}).
\end{align*}
We now consider the difference
\begin{align*}
\bT_{12}:=\bT_1-\overline{\bT}_2,
\quad
\bu_{12}=\bu_1-\overline{\bu}_2, 
\quad
\rho_{12}=\rho_1-\overline{\rho}_2,
\quad
\eta_{12}=\eta_1-\eta_2.
\end{align*}
and find that $\bT_{12}$ solves
\begin{equation}
\begin{aligned}
\label{diffEquaSigma}
\partial_t \bT_{12}+ \mathbf{u}_1\cdot \nabx \bT_{12}
&=
\nabx \bu_1\bT_{12} + \bT_{12}(\nabx \bu_1)^\top - 2(\bT_{12} - \rho_{12} \mathbb{I})
+\Delx \bT_{12}
\\&
+
\nabx \bu_{12}\overline{\bT}_2 + \overline{\bT}_2(\nabx \bu_{12})^\top 
- \mathbf{u}_{12}\cdot \nabx \overline{\bT}_2
\\&+
\divx(\nabx \overline{\bT}_2(\mathbb{I}-\mathbb{A}_{-\eta_{12}}))
-
\mathbb{H}_{-\eta_{12}}(\overline{\rho}_2,\overline{\bu}_2,\overline{\bT}_2)
\end{aligned}
\end{equation}
on ${I_*} \times \Omega_{\eta_1}$ with identically zero initial condition.
If we now test \eqref{diffEquaSigma} with $\bT_{12}$, then for $t\in {I_*}$, we obtain
\begin{equation}
\begin{aligned}
\frac{1}{2}&
\frac{\dd}{\dt}\Vert \bT_{12}\Vert_{L^2(\Omega_{\eta_1})}^2
+
\Vert \nabx\bT_{12}\Vert_{L^2(\Omega_{\eta_1})}^2
+ 
\Vert \bT_{12}\Vert_{L^2(\Omega_{\eta_1})}^2
\leq
\Vert \rho_{12}\Vert_{L^2(\Omega_{\eta_1})}^2
\\&
+
\int_{\partial\Omega_{\eta_1}}
(\bn_{\eta_1}\cdot\nabx)\bT_{12}:\bT_{12}
\dd\mathcal{H}^1
+
2\int_{\Omega_{\eta_1}}\vert\nabx\bu_1\vert
\vert \bT_{12}\vert^2\dx
\\&
+
\int_{\Omega_{\eta_1}}
\big[
\nabx \bu_{12}\overline{\bT}_2 + \overline{\bT}_2(\nabx \bu_{12})^\top 
\big]:\bT_{12}\dx
-
\int_{\Omega_{\eta_1}}
\mathbf{u}_{12}\cdot \nabx \overline{\bT}_2
 :\bT_{12} \dx
\\&
+
\int_{\Omega_{\eta_1}} 
\divx(\nabx \overline{\bT}_2(\mathbb{I}-\mathbb{A}_{-\eta_{12}}))
:\bT_{12} \dx
-
\int_{\Omega_{\eta_1}}
\mathbb{H}_{-\eta_{12}}(\overline{\rho}_2,\overline{\bu}_2,\overline{\bT}_2) :\bT_{12} \dx
\\&
=: \Vert \rho_{12}\Vert_{L^2(\Omega_{\eta_1})}^2+I_1+\ldots+I_6
\end{aligned}
\end{equation} 
Now note that  $\bn_{\eta_1}\cdot\nabx\bT_1=0$ on ${I_*}\times \partial\Omega_{\eta_1}$ and that
\begin{align*}
\Vert \bT_{12}\Vert_{L^2(\partial\Omega_{\eta_1})}
\lesssim
\Vert \bT_{12}\Vert_{W^{1/4,2}(\partial\Omega_{\eta_1})}
\lesssim
\Vert \bT_{12}\Vert_{W^{3/4,2}(\Omega_{\eta_1})}
\lesssim
\Vert \bT_{12}\Vert_{L^{2}(\Omega_{\eta_1})}^{1/4}
\Vert \nabx \bT_{12}\Vert_{L^{2}(\Omega_{\eta_1})}^{3/4}.
\end{align*}
Thus, it follows that for any $t\in \overline{I_*}$,
\begin{align*}
\int_0^t
I_1
\dt'
&
\lesssim
\int_0^t
\Vert
 \overline{\bT}_2
\Vert_{W^{2,2}(\Omega_{\eta_1})}
\Vert 
\bT_{12}
\Vert_{L^2(\Omega_{\eta_1})}^{1/4}
\Vert
\nabx
\bT_{12}
\Vert_{L^2(\Omega_{\eta_1})}^{3/4} \dt'
\\&\leq
\delta
\int_0^t
\Vert
 \overline{\bT}_2
\Vert_{W^{2,2}(\Omega_{\eta_1})}^2 \dt'
+
\delta
\int_0^t
\Vert \nabx\bT_{12}\Vert_{L^2(\Omega_{\eta_1})}^2\dt'
+
c
\int_0^t
\Vert 
\bT_{12}
\Vert_{L^2(\Omega_{\eta_1})}^2\dt'
\\&\leq
\delta
+
\delta
\int_0^t
\Vert \nabx\bT_{12}\Vert_{L^2(\Omega_{\eta_1})}^2\dt'
+
c
\int_0^t
\Vert 
\bT_{12}
\Vert_{L^2(\Omega_{\eta_1})}^2\dt'
\end{align*} 
Next, we use interpolation to obtain
\begin{align*}
\int_0^t
I_2\dt'
&\lesssim 
\int_0^t
\Vert\nabx \bu_1\Vert_{L^6(\Omega_{\eta_1})}  \Vert\bT_{12}\Vert_{L^2(\Omega_{\eta_1})}
\Vert\bT_{12} \Vert_{L^{3}(\Omega_{\eta_1})} \dt'
\\&\lesssim 
\int_0^t
\Vert  \bu_1\Vert_{W^{2,2}(\Omega_{\eta_1})}
 \Vert\bT_{12}\Vert_{L^2(\Omega_{\eta_1})}^{3/2}
\Vert\nabx\bT_{12} \Vert_{L^{2}(\Omega_{\eta_1})}^{1/2} \dt'
\\
&\leq
\delta  
\int_0^t
\Vert\nabx \bT_{12} \Vert_{L^2(\Omega_{\eta_1})}^2\dt'
+
c\int_0^t
\big(1+
\Vert  \bu_1\Vert_{W^{2,2}(\Omega_{\eta_1})}^{2}
\big)
\Vert\bT_{12} \Vert_{L^2(\Omega_{\eta_1})}^{2}\dt'
\end{align*} 
for any $\delta>0$.
Next, given that the embedding $W^{1,2}(\Omega_{\eta_1})\hookrightarrow L^{6}(\Omega_{\eta_1})$ is continuous, it follows from interpolation that
\begin{equation}
\begin{aligned}
\int_0^t
I_3\dt
&\lesssim
\int_0^t
\Vert\nabx\bu_{12}\Vert_{L^2(\Omega_{\eta_1})}
\Vert \overline{\bT}_2\Vert_{L^{6}(\Omega_{\eta_1})}
\Vert \bT_{12}\Vert_{L^3(\Omega_{\eta_1})}\dt'
\\
&\lesssim
\int_0^t
\Vert\nabx\bu_{12}\Vert_{L^2(\Omega_{\eta_1})}
\Vert \overline{\bT}_2\Vert_{W^{1,2}(\Omega_{\eta_1})}
\Vert \bT_{12}\Vert_{L^2(\Omega_{\eta_1})}^{1/2}
\Vert \nabx\bT_{12}\Vert_{L^2(\Omega_{\eta_1})}^{1/2} 
\dt'
\\
&\lesssim
\int_0^t
\Vert\nabx\bu_{12}\Vert_{L^2(\Omega_{\eta_1})}
\Vert \overline{\bT}_2\Vert_{L^{2}(\Omega_{\eta_1})}^{1/2}
\Vert \overline{\bT}_2\Vert_{W^{2,2}(\Omega_{\eta_1})}^{1/2}
\Vert \bT_{12}\Vert_{L^2(\Omega_{\eta_1})}^{1/2}
\Vert \nabx\bT_{12}\Vert_{L^2(\Omega_{\eta_1})}^{1/2} 
\dt'
\\
&\leq
\delta
\int_0^t
\Vert \nabx\bu_{12}\Vert_{L^2(\Omega_{\eta_1})}^2\dt'
+
\delta
\int_0^t
\Vert \nabx\bT_{12}\Vert_{L^2(\Omega_{\eta_1})}^2\dt'
+
c
\int_0^t 
\Vert \overline{\bT}_2\Vert_{W^{2,2}(\Omega_{\eta_1})}^2
\Vert \bT_{12}\Vert_{L^2(\Omega_{\eta_1})}^2 \dt'
\end{aligned}
\end{equation} 
for any $\delta>0$ and where we have used the fact that $\Vert \overline{\bT}_2\Vert_{L^{2}(\Omega_{\eta_1})}^2$ is essentially bounded in the last step. Similarly,
\begin{equation}
\begin{aligned}
\int_0^t 
I_4\dt'
&\lesssim
\int_0^t 
\Vert  \bu_{12}\Vert_{L^{6}(\Omega_{\eta_1})}
\Vert \nabx\overline{\bT}_2\Vert_{L^{2}(\Omega_{\eta_1})}
\Vert \bT_{12}\Vert_{L^2(\Omega_{\eta_1})}^{1/2}
\Vert\nabx \bT_{12}\Vert_{L^2(\Omega_{\eta_1})}
^{1/2}\dt'
\\&\leq
\delta
\int_0^t 
\Vert \nabx\bu_{12}\Vert_{L^2(\Omega_{\eta_1})}^2\dt'
+
\delta
\int_0^t 
\Vert \nabx\bT_{12}\Vert_{L^2(\Omega_{\eta_1})}^2
\dt'
+
c
\int_0^t  
\Vert \overline{\bT}_2\Vert_{W^{2,2}(\Omega_{\eta_1})}^2
\Vert \bT_{12}\Vert_{L^2(\Omega_{\eta_1})}^2\dt'.
\end{aligned}
\end{equation} 
Next, we rewrite $I_5$ as
\begin{equation}
\begin{aligned} \nonumber
\int_{\partial\Omega_{\eta_1}}
\bn_{\eta_1}\cdot\nabx \overline{\bT}_2(\mathbb{I}-\mathbb{A}_{-\eta_{12}})
:\bT_{12} \dx
 -
\int_{\Omega_{\eta_1}}
 \nabx \overline{\bT}_2(\mathbb{I}-\mathbb{A}_{-\eta_{12}}):
:\nabx\bT_{12} \dx
\end{aligned}
\end{equation}
so by the trace theorem and the fact that
$\mathbb{I}-\mathbb{A}_{-\eta_{12}}\sim -\naby\eta_{12}$ holds in norm,
\begin{equation}
\begin{aligned}\nonumber
\int_{\partial\Omega_{\eta_1}}
\bn_{\eta_1}\cdot\nabx \overline{\bT}_2&(\mathbb{I}-\mathbb{A}_{-\eta_{12}})
:\bT_{12} \dx
\lesssim
\Vert
\nabx \overline{\bT}_2
\Vert_{L^4(\partial\Omega_{\eta_1})}
\Vert
\mathbb{I}-\mathbb{A}_{-\eta_{12}}
\Vert_{L^4(\partial\Omega_{\eta_1})}
\Vert
\bT_{12}
\Vert_{L^2(\partial\Omega_{\eta_1})}
\\&
\lesssim
\Vert
\nabx \overline{\bT}_2
\Vert_{W^{1/2,2}( \partial\Omega_{\eta_1})}
\Vert
\eta_{12}
\Vert_{W^{1,4}(\omega)}
\Vert 
\bT_{12}
\Vert_{W^{3/4,2}(\Omega_{\eta_1})} 
\\&
\lesssim
\Vert
  \overline{\bT}_2
\Vert_{W^{2,2}( \Omega_{\eta_1})}
\Vert
\eta_{12}
\Vert_{W^{2,2}(\omega)}
\Vert 
\bT_{12}
\Vert_{L^2(\Omega_{\eta_1})}^{1/4}
\Vert
\nabx
\bT_{12}
\Vert_{L^2(\Omega_{\eta_1})}^{3/4}
\\&\leq
\delta
\Vert \nabx\bT_{12}\Vert_{L^2(\Omega_{\eta_1})}^2
+
\delta
\Vert 
\bT_{12}
\Vert_{L^2(\Omega_{\eta_1})}^2
+
c
\Vert  \overline{\bT}_2
\Vert_{W^{2,2}(\Omega_{\eta_1})}^2
\Vert  \eta_{12} \Vert_{W^{2,2}(\omega )}^2
\end{aligned}
\end{equation}
whereas
\begin{equation}
\begin{aligned}\nonumber
\int_{\Omega_{\eta_1}}
 \nabx \overline{\bT}_2&(\mathbb{I}-\mathbb{A}_{-\eta_{12}}):
:\nabx\bT_{12} \dx 
\\&
\leq
\delta
\Vert \nabx\bT_{12}\Vert_{L^2(\Omega_{\eta_1})}^2
+
c
\Vert  \overline{\bT}_2
\Vert_{W^{2,2}(\Omega_{\eta_1})}^2
\Vert  \eta_{12} \Vert_{W^{2,2}(\omega )}^2.
\end{aligned}
\end{equation}
It follows that
\begin{equation}
\begin{aligned}
\int_0^t 
I_5\dt'
\leq
\delta
\int_0^t 
\Vert \nabx\bT_{12}\Vert_{L^2(\Omega_{\eta_1})}^2
\dt'
+
\delta
\int_0^t 
\Vert 
\bT_{12}
\Vert_{L^2(\Omega_{\eta_1})}^2\dt'
+
c
\int_0^t 
\Vert  \overline{\bT}_2
\Vert_{W^{2,2}(\Omega_{\eta_1})}^2
\Vert  \eta_{12} \Vert_{W^{2,2}(\omega )}^2
\dt'
\end{aligned}
\end{equation}
To deal with $I_6$ we first expand it as follows
\begin{equation}
\begin{aligned}
I_6 
&=
\int_{\Omega_{\eta_1}}
(1-J_{-\eta_{12}})\partial_t\overline{\bT}_2
 :\bT_{12} \dx
\\&-
\int_{\Omega_{\eta_1}}
J_{-\eta_{12}} \nabx \overline{\bT}_2\cdot\partial_t\bm{\Psi}_{-\eta_{12}}^{-1}\circ \bm{\Psi}_{-\eta_{12}}
 :\bT_{12} \dx
\\&+
\int_{\Omega_{\eta_1}}
\nabx\overline{\bu}_2(\mathbb{B}_{-\eta_{12}}-\mathbb{I})\overline{\bT}_2
 :\bT_{12} \dx
 \\&+
\int_{\Omega_{\eta_1}}
\overline{\bT}_2(\mathbb{B}_{-\eta_{12}}-\mathbb{I})^\top (\nabx\overline{\bu}_2)^\top
:\bT_{12} \dx
 \\&+
\int_{\Omega_{\eta_1}}
\overline{\bu}_2 \cdot\nabx \overline{\bT}_2 (\mathbb{I}-\mathbb{B}_{-\eta_{12}})
 :\bT_{12} \dx
  \\&+2
\int_{\Omega_{\eta_1}}
(1-J_{-\eta_{12}}) (\overline{\bT}_2-\overline{\rho}_2\mathbb{I}) :\bT_{12} \dx
\\&=:I_6^1+\ldots+I_6^6.
\end{aligned}
\end{equation} 
Then we have, by interpolation 
\begin{align*}
I_6^1&\lesssim
\Vert  \eta_{12}\Vert_{W^{1,6}(\omega )}
\Vert \partial_t\overline{\bT}_2\Vert_{L^{2}(\Omega_{\eta_1})}
\Vert  \bT_{12}\Vert_{L^3(\Omega_{\eta_1})}
\\
&\lesssim
\Vert  \eta_{12}\Vert_{W^{2,2}(\omega )}
\Vert \partial_t\overline{\bT}_2\Vert_{L^{2}(\Omega_{\eta_1})}
\Vert  \bT_{12}\Vert_{L^{2}(\Omega_{\eta_1})}^{1/2}
\Vert \nabx \bT_{12}\Vert_{L^{2}(\Omega_{\eta_1})}^{1/2} 
\\
&\leq
\delta
\Vert \nabx \bT_{12}\Vert_{L^{2}(\Omega_{\eta_1})}^2
+\delta
\Vert  \bT_{12}\Vert_{L^{2}(\Omega_{\eta_1})}^2
+
c
\Vert  \eta_{12}\Vert_{W^{2,2}(\omega )}^2
\Vert \partial_t\overline{\bT}_2\Vert_{L^{2}(\Omega_{\eta_1})}^2.
\end{align*}
For $I_6^2$, we use  interpolation to obtain
\begin{align*}
I_6^2 
&\lesssim
\Vert  \partial_t\eta_{12}\Vert_{W^{1,2}(\omega )}
\Vert \overline{\bT}_2\Vert_{W^{1,6}(\Omega_{\eta_1})}
\Vert \bT_{12}\Vert_{L^3(\Omega_{\eta_1})}
\\
&\lesssim
\Vert  \partial_t\eta_{12}\Vert_{W^{1,2}(\omega )} 
\Vert \overline{\bT}_2\Vert_{W^{2,2}(\Omega_{\eta_1})} 
\Vert \bT_{12}\Vert_{L^2(\Omega_{\eta_1})}^{1/2}
\Vert \nabx\bT_{12}\Vert_{L^2(\Omega_{\eta_1})}^{1/2} 
\\
&\leq
\delta
\Vert \nabx\bT_{12}\Vert_{L^2(\Omega_{\eta_1})}^2
+
\delta 
\Vert \bT_{12}\Vert_{L^2(\Omega_{\eta_1})}^2
+
c
\Vert  \partial_t\eta_{12}\Vert_{W^{1,2}(\omega )}^2
\Vert \overline{\bT}_2\Vert_{W^{1,2}(\Omega_{\eta_1})}^2
.
\end{align*}
Next, we have
\begin{align*}
I_6^3 +I_6^4 
&\lesssim  
\Vert \nabx\overline{\bu}_2\Vert_{L^{6}(\Omega_{\eta_1})}
\Vert \eta_{12}\Vert_{W^{1,6}(\omega )}
\Vert \overline{\bT}_2\Vert_{L^{6}(\Omega_{\eta_1})}
\Vert \bT_{12}\Vert_{L^2(\Omega_{\eta_1})}
\\&
\lesssim
\Vert \eta_{12}\Vert_{W^{2,2}(\omega )}^2
\Vert \overline{\bT}_2\Vert_{W^{1,2}(\Omega_{\eta_1})}^2
+
\Vert \bT_{12}\Vert_{L^2(\Omega_{\eta_1})}^2
\Vert \overline{\bu}_2\Vert_{W^{2,2}(\Omega_{\eta_1})}^2
\end{align*}
and similarly,
\begin{align*}
I_6^5
&\lesssim
\Vert \eta_{12}\Vert_{W^{2,2}(\omega )}^2
\Vert \overline{\bT}_2\Vert_{W^{2,2}(\Omega_{\eta_1})}^2
+
\Vert \bT_{12}\Vert_{L^2(\Omega_{\eta_1})}^2
\Vert \overline{\bu}_2\Vert_{W^{1,2}(\Omega_{\eta_1})}^2,
\\&I_6^6
\leq
\delta
\Vert \bT_{12}\Vert_{L^2(\Omega_{\eta_1})}^2
+
c
\Vert  \eta_{12} \Vert_{W^{2,2}(\omega )}^2
\big(\Vert \overline{\bT}_2\Vert_{W^{1,2}(\Omega_{\eta_1})}^2
+
\Vert \overline{\rho}_2\Vert_{W^{1,2}(\Omega_{\eta_1})}^2\big)
.
\end{align*}
Thus, it follows that
\begin{equation}
\begin{aligned}
\int_0^t 
I_6\dt'
\leq&
\delta
\int_0^t 
\Vert \nabx \bT_{12}\Vert_{L^{2}(\Omega_{\eta_1})}^2\dt'
+
\delta
\int_0^t 
\Vert  \bT_{12}\Vert_{L^{2}(\Omega_{\eta_1})}^2\dt'
+
c\int_0^t 
\Vert \bT_{12}\Vert_{L^2(\Omega_{\eta_1})}^2
\Vert \overline{\bu}_2\Vert_{W^{2,2}(\Omega_{\eta_1})}^2\dt'
\\&+
c\sup_{t\in I_*}
\Vert  \eta_{12}(t)\Vert_{W^{2,2}(\omega )}^2
+
c
\int_0^t 
\Vert  \partial_{t'}\eta_{12}\Vert_{W^{1,2}(\omega )}^2\dt'
.
\end{aligned}
\end{equation}
where we used the estimate
\begin{align*}
\sup_{t\in I_*}
\Vert \overline{\bT}_2(t)\Vert_{W^{1,2}(\Omega_{\eta_1})}^2
+
\int_0^t 
\big(
\Vert \partial_{t'}\overline{\bT}_2\Vert_{L^{2}(\Omega_{\eta_1})}^2
+
\Vert \overline{\bT}_2\Vert_{W^{2,2}(\Omega_{\eta_1})}^2
+
\Vert \overline{\rho}_2\Vert_{W^{1,2}(\Omega_{\eta_1})}^2\big)\dt'
\lesssim 1.
\end{align*}
If we combine the estimates for $I_1,\ldots,I_6$, we obtain
\begin{equation}
\begin{aligned} \Vert \bT_{12}(t)&\Vert_{L^2(\Omega_{\eta_1})}^2
+
\int_0^t\big(
\Vert \nabx\bT_{12}\Vert_{L^2(\Omega_{\eta_1})}^2
+ 
\Vert \bT_{12}\Vert_{L^2(\Omega_{\eta_1})}^2\big)\dt'
\\
\lesssim
&\delta
 +
\sup_{t\in I_*}
\Vert  \eta_{12}(t)\Vert_{W^{2,2}(\omega )}^2
\\&+
\int_0^t
\big(
\Vert \rho_{12}\Vert_{L^2(\Omega_{\eta_1})}^2
+ 
\Vert\nabx \bu_{12}\Vert_{L^{2}(\Omega_{\eta_1})}^2
+ 
\Vert  \partial_{t'}\eta_{12}\Vert_{W^{1,2}(\omega )}^2
\big)\dt'
\\&+
 \int_0^t
\Vert \bT_{12}\Vert_{L^2(\Omega_{\eta_1})}^2
\big(1+
\Vert  \bu_1\Vert_{W^{2,2}(\Omega_{\eta_1})}^{2}
+
\Vert \overline{\bu}_2\Vert_{W^{2,2}(\Omega_{\eta_1})}^2
+ 
\Vert \overline{\bT}_2\Vert_{W^{2,2}(\Omega_{\eta_1})}^2
\big)\dt'.
\end{aligned}
\end{equation} 
for any $t\in \overline{I_*}$ so that by Gr\"onwall's lemma, we obtain
\begin{equation}
\begin{aligned}
\sup_{t\in I_*}&\Vert \bT_{12}(t)\Vert_{L^2(\Omega_{\eta_1})}^2
+
\int_{I_*}\big(
\Vert \nabx\bT_{12}\Vert_{L^2(\Omega_{\eta_1})}^2
+ 
\Vert \bT_{12}\Vert_{L^2(\Omega_{\eta_1})}^2\big)\dt
\\
\lesssim
&e^{c\int_{I_*} \big(1+
\Vert  \bu_1\Vert_{W^{2,2}(\Omega_{\eta_1})}^{2}+
\Vert \overline{\bu}_2\Vert_{W^{2,2}(\Omega_{\eta_1})}^2
+ 
\Vert \overline{\bT}_2\Vert_{W^{2,2}(\Omega_{\eta_1})}^2
\big)\dt'} 
\\&\times\bigg[
\delta 
+
\sup_{t \in I_*}
\Vert  \eta_{12}(t)\Vert_{W^{2,2}(\omega )}^2 
 +
\int_{I_*}
\big(
\Vert \rho_{12}\Vert_{L^2(\Omega_{\eta_1})}^2
+  
\Vert\nabx \bu_{12}\Vert_{L^{2}(\Omega_{\eta_1})}^2   
+ 
\Vert  \partial_t\eta_{12}\Vert_{W^{1,2}(\omega )}^2
\big)
\dt \bigg]
\end{aligned}
\end{equation} 
Similarly, the difference of two strong solutions of \eqref{rhoEquAlone} satisfies
\begin{equation}
\begin{aligned}
\sup_{t\in {I_*}}&
\Vert \rho_{12}(t)\Vert_{L^2(\Omega_{\eta_1})}^2
+
\int_{I_*}
\Vert \nabx\rho_{12}\Vert_{L^2(\Omega_{\eta_1})}^2
\dt 
\lesssim
e^{c\int_{I_*} \big(1
+ 
\Vert \overline{\bT}_2\Vert_{W^{2,2}(\Omega_{\eta_1})}^2
\big)\dt'} 
\\&\times\bigg[\delta 
+
\sup_{t\in {I_*}}
\Vert  \eta_{12}(t) \Vert_{W^{2,2}(\omega )}^2 
 +
\int_{I_*}
\big(
\Vert \nabx  \bu_{12}\Vert_{L^{2}(\Omega_{\eta_1})}^2+
\Vert  \partial_t\eta_{12}\Vert_{W^{1,2}(\omega )}^2\big)
\dt
\bigg]
\end{aligned}
\end{equation} 
for any $\delta>0$.
Combining the two estimates above therefore yields
\begin{equation}
\begin{aligned}
\label{contrEst0}
\sup_{t\in {I_*}}&
\big(\Vert \rho_{12}(t)\Vert_{L^2(\Omega_{\eta_1})}^2
+
\Vert \bT_{12}(t)\Vert_{L^2(\Omega_{\eta_1})}^2
\big)
+
\int_{I_*}
\big(\Vert \nabx\rho_{12}\Vert_{L^2(\Omega_{\eta_1})}^2
+
\Vert \nabx\bT_{12}\Vert_{L^2(\Omega_{\eta_1})}^2
\big)
\dt 
\\&\lesssim
(1+T_*)\bigg[\delta
+
\sup_{t\in {I_*}}
\Vert  \eta_{12} \Vert_{W^{2,2}(\omega )}^2 
+
\int_{I_*}
\big(
\Vert \nabx \bu_{12}\Vert_{L^{2}(\Omega_{\eta_1})}^2+
\Vert  \partial_t\eta_{12}\Vert_{W^{1,2}(\omega )}^2\big)
\dt 
\bigg]
\end{aligned}
\end{equation} 
for any $\delta>0$.
Now,  let consider two strong solutions $( \eta_i, \bu_i,  p_i )$, $i=1,2$  of  \eqref{divfreeAlone}--\eqref{interfaceAlone} with data $(\bff, g, \eta_0, \eta_\star, \bu_0, \underline{\bT}_i)$, respectively. The existence of these solutions is shown in  Theorem \ref{thm:BMSS} under the Ladyzhenskaya--Prodi--Serrin condition \cite{lady1967uniqueness, prodi1959un, Serrin1963TheIV}
For 
\begin{align*}
\underline{\bT}_{12}:=\underline{\bT}_1-\underline{\overline{\bT}}_2,
\quad
\bu_{12}=\bu_1-\overline{\bu}_2,  
\quad
\eta_{12}=\eta_1-\eta_2,
\end{align*}
where $\underline{\overline{\bT}}_2:=\underline{\bT}_2\circ\bm{\Psi}_{\eta_2-\eta_1}$, it follows from \cite[Remark 5.2]{BMSS} that
\begin{align*}
\sup_{t\in {I_*}}
\Vert  \eta_{12} \Vert_{W^{2,2}(\omega )}^2
+
\int_{I_*}
\big(
\Vert  \nabx\bu_{12}\Vert_{L^{2}(\Omega_{\eta_1})}^2+
\Vert  \partial_t\eta_{12}\Vert_{W^{1,2}(\omega )}^2\big)
\dt
&\lesssim
\int_{I_*}
\Vert  \underline{\bT}_{12}\Vert_{L^{2}(\Omega_{\eta_1})}^2
\dt
\\&\lesssim
T_*
\Vert ( \underline{\rho}_{12},  \underline{\bT}_{12})\Vert_{Y_{\eta_1}\otimes Y_{\eta_1}}^2.
\end{align*}
Inserting into \eqref{contrEst0} then yields
\begin{equation}
\begin{aligned}
\label{contrEst1}
\Vert (\rho_{12}, \bT_{12})\Vert_{Y_{\eta_1}\otimes Y_{\eta_1}}^2
&\lesssim
(1+T_*)\bigg[
\delta
+
T_*
\Vert ( \underline{\rho}_{12},  \underline{\bT}_{12})\Vert_{Y_{\eta_1}\otimes Y_{\eta_1}}^2
\bigg].
\end{aligned}
\end{equation} 
Since a contraction map holds trivially when $\Vert ( \underline{\rho}_{12},  \underline{\bT}_{12})\Vert_{Y_{\eta_1}\otimes Y_{\eta_1}}^2=0$, we may assume that $\Vert ( \underline{\rho}_{12},  \underline{\bT}_{12})\Vert_{Y_{\eta_1}\otimes Y_{\eta_1}}^2$ is strictly positive and bounded the sum of the individual terms $\rho_1,\underline{\rho}_2,\bT_1$ and $\underline{\bT}_2$ in $Y_{\eta_1}$. In this case, we can divide and multiply the $\delta$-term in \eqref{contrEst1} by $\Vert (\rho_{12}, \bT_{12})\Vert_{Y_{\eta_1}\otimes Y_{\eta_1}}^2$ such that for $\delta>0$ and $T_*>0$ chosen small enough, we obtain
\begin{equation}
\begin{aligned}
\label{contrEst2}
\Vert (\rho_{12}, \bT_{12})\Vert_{Y_{\eta_1}\otimes Y_{\eta_1}}^2
&\leq
\tfrac{1}{2}
\Vert ( \underline{\rho}_{12},  \underline{\bT}_{12})\Vert_{Y_{\eta_1}\otimes Y_{\eta_1}}^2.
\end{aligned}
\end{equation} 
The existence of the desired fixed point now follows. 

 \section{Global estimates} 
\label{sec:Apriori}
The goal of this section is to show that the local strong solution constructed in the immediate section above actually holds globally over the whole time interval $I=(0,T)$. This will follow from the following global estimate result. 
\begin{proposition}
\label{prop:strongEst}
Let $r \in[2,\infty)$ and $s \in(3,\infty]$ be such that $2/{r}+3/{s}\leq 1$ hold.
Then any strong solution $(\eta, \bu, p,\rho,\bT)$ 
 of   \eqref{divfree}--\eqref{interface} with dataset $(\bff, g, \rho_0, \bT_0, \bu_0, \eta_0, \eta_\star)$ satisfies
 \begin{equation}
\begin{aligned} 
\int_I&\big(\Vert \partial_t \rho\Vert_{L^2(\Oeta)}^2
+
\Vert \partial_t \bT\Vert_{L^2(\Oeta)}^2
+
\Vert \partial_t \bu\Vert_{L^2(\Oeta)}^2
+
\Vert \partial_t^2\eta\Vert_{L^{2}(\omega)}^2
\big)\dt
\\&+
\sup_{t\in I} \big(\Vert \rho(t)\Vert_{W^{1,2}(\Oeta)}^2
+
\Vert \bT(t)\Vert_{W^{1,2}(\Oeta)}^2
+
\Vert \bu(t)\Vert_{W^{1,2}(\Oeta)}^2
+
\Vert \partial_t\eta(t)\Vert_{W^{1,2}(\omega)}^2
+
\Vert  \eta(t)\Vert_{W^{3,2}(\omega)}^2
\big)
\\&
+\int_I\big(\Vert  \rho\Vert_{W^{2,2}(\Oeta)}
+
\Vert  \bT\Vert_{W^{2,2}(\Oeta)}
+
\Vert  \bu\Vert_{W^{2,2}(\Oeta)}
+
\Vert  \nabx p\Vert_{L^{2}(\Oeta)}
+
\Vert \partial_t\eta\Vert_{W^{2,2}(\omega)}^2
+
\Vert \eta\Vert_{W^{4,2}(\omega)}^2
\big)
\\&\lesssim
\Vert  \rho_0\Vert_{W^{1,2}(\Omega_{\eta_0})}
+
\Vert  \bT_0\Vert_{W^{1,2}(\Omega_{\eta_0})}
+
\Vert  \bu_0\Vert_{W^{1,2}(\Omega_{\eta_0})}
+
 \Vert\eta_\star\Vert_{W^{1,2}(\omega)}^2 
+ 
\Vert\eta_0\Vert_{W^{3,2}(\omega)}^2
\\&
+
\int_I\big(\Vert \bff\Vert_{L^2(\Oeta)}^2
+
\Vert g\Vert_{L^{2}(\omega)}^2\big)\dt.
\label{eq:thm:mainFP00}
\end{aligned}
\end{equation} 
 \end{proposition}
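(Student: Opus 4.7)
The plan is to decouple the estimate along the same lines as the local existence argument in Section 4: bootstrap from the basic (weak) energy estimate into the solvent-structure strong bound (Theorem \ref{thm:BMSS}), and then into the solute strong bound (Theorem \ref{thm:mainFP}). Circularity is avoided because the basic energy estimate already controls $\int_I\|\bT\|_{W^{1,2}(\Oeta)}^2\dt$ globally.

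First, I record from Definition \ref{def:weaksolmartFP}(e)-(f) that
\begin{align*}
\sup_{t\in I}\bigl(\|\bu(t)\|_{L^2(\Oeta)}^2+\|\eta\|_{W^{2,2}(\omega)}^2+\|\partial_t\eta\|_{L^2(\omega)}^2+\|\rho(t)\|_{L^2}^2+\|\bT(t)\|_{L^2}^2\bigr)
+\int_I\bigl(\|\nabx\bu\|_{L^2}^2+\|\nabx\rho\|_{L^2}^2+\|\nabx\bT\|_{L^2}^2+\|\bT\|_{L^2}^2\bigr)\dt
\end{align*}
is bounded by the data, so in particular $\int_I\|\bT\|_{W^{1,2}(\Oeta)}^2\dt \lesssim \mathrm{data}$. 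An extension of $\bT$ to $L^2(I;W^{1,2}_{\mathrm{loc}}(\R^3))$ with a uniform norm is available because the LPS assumption $\eta\in L^\infty(I;C^1(\omega))$ makes the Hanzawa transform $\bm{\Psi}_\eta$ uniformly bi-Lipschitz, so a standard extension operator has a controlled constant.

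Next, regarding $(\eta,\bu,p)$ as the weak solution of the solvent-structure subproblem \eqref{divfreeAlone}--\eqref{interfaceAlone} with source $(\bff,g,\underline{\bT})=(\bff,g,\bT)$, the LPS condition \eqref{fluid-structureLPScondition} is precisely the hypothesis of Theorem \ref{thm:BMSS}, so $(\eta,\bu,p)$ is actually a strong solution. The quantitative bound \eqref{eq:thm:mainFPxy} then yields
\begin{align*}
\sup_{t\in I}\bigl(\|\eta(t)\|_{W^{3,2}(\omega)}^2+\|\partial_t\eta(t)\|_{W^{1,2}(\omega)}^2+\|\bu(t)\|_{W^{1,2}(\Oeta)}^2\bigr)
&+\int_I\bigl(\|\eta\|_{W^{4,2}}^2+\|\partial_t\eta\|_{W^{2,2}}^2+\|\partial_t^2\eta\|_{L^2}^2+\|\partial_t\bu\|_{L^2}^2+\|\bu\|_{W^{2,2}}^2+\|\nabx p\|_{L^2}^2\bigr)\dt
\\&\lesssim \mathrm{data}+\int_I\|\bT\|_{W^{1,2}(\Oeta)}^2\dt \lesssim \mathrm{data},
\end{align*}
where the final inequality uses the first paragraph. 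This disposes of every term in \eqref{eq:thm:mainFP00} that involves $\eta$, $\bu$ or $p$.

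Finally, regarding $(\rho,\bT)$ as the strong solution of the solute subproblem \eqref{rhoEquAlone}-\eqref{bddCondSolvAlone} with $(\bv,\zeta)=(\bu,\eta)$, the dataset regularity \eqref{fokkerPlanckDataAlone} is now supplied by the previous step (in particular, $\bu\in W^{1,2}(I;L^2_{\divx})\cap L^2(I;W^{2,2})$ and $\eta\in W^{1,\infty}(I;W^{1,2})\cap L^\infty(I;W^{3,2})\cap W^{1,2}(I;W^{2,2})\cap W^{2,2}(I;L^2)$). Theorem \ref{thm:mainFP} and its bound \eqref{eq:thm:mainFP} then yield the remaining $(\rho,\bT)$-terms in \eqref{eq:thm:mainFP00} up to $\int_I(\|\partial_t\eta\|_{W^{2,2}(\omega)}^2+\|\bu\|_{W^{2,2}(\Oeta)}^2)\dt$, which was just bounded by data.

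The step I expect to require the most care is the bootstrap compatibility: one has to check that the weak-solution data actually satisfies the stronger regularity assumptions \eqref{datasetAlone} and \eqref{fokkerPlanckDataAlone} required by the two subproblem theorems (notably the $W^{1,\infty}(I;W^{1,2}(\omega))$ part of $\eta$ and the fact that the weak velocity really coincides with the strong one in the sense needed by Theorem \ref{thm:BMSS}), together with the uniformity of the $\bT$-extension to $\R^3$. All of these follow from \eqref{fluid-structureLPScondition} but need to be spelled out; the rest is a linear combination of the two subproblem bounds.
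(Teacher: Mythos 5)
Your proposal follows essentially the same route as the paper: derive the energy-level bounds (which already control $\int_I\|\bT\|^2_{W^{1,2}(\Oeta)}\dt$, breaking the circularity), feed $\divx\bT$ as a forcing into the conditional solvent--structure regularity result of \cite{BMSS} under the LPS assumption \eqref{fluid-structureLPScondition}, and then run the solute strong estimates of Theorem \ref{thm:mainFP} with $(\bv,\zeta)=(\bu,\eta)$; this is exactly the paper's decomposition in Section \ref{sec:Apriori}.

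One bookkeeping caveat: the inequality \eqref{eq:thm:mainFPxy} that you cite only bounds $\int_{I}(\|\partial_t\eta\|^2_{W^{2,2}(\omega)}+\|\bu\|^2_{W^{2,2}(\Oeta)})\dt$, not the full list you attribute to it. In the paper the remaining fluid--structure terms ($\sup_t$ of $\|\nabx\bu\|_{L^2}$, $\|\partial_t\naby\eta\|_{L^2}$, $\|\naby\Dely\eta\|_{L^2}$, and the time integrals of $\|\partial_t\bu\|_{L^2}^2$, $\|\nabx p\|_{L^2}^2$, $\|\partial_t^2\eta\|_{L^2}^2$) come from the acceleration estimate \eqref{acceleration} of \cite[Theorem 4.1]{BMSS}, whose right-hand side again needs only $\int_I\|\nabx\bT\|^2_{L^2(\Oeta)}\dt$, and the bound on $\int_I\|\eta\|^2_{W^{4,2}(\omega)}\dt$ is not part of that estimate at all: it is recovered afterwards by reading $\Dely^2\eta$ off the shell equation \eqref{shellEQ} via the trace theorem, using the already established bounds on $\nabx^2\bu$, $\nabx p$ and $\nabx\bT$ (see \eqref{higherShellReg}). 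If you add that final elliptic step (and a word to the effect that a strong solution satisfies the energy bounds you quote from Definition \ref{def:weaksolmartFP}(e)--(f), which follows by testing, as the paper does), your argument matches the paper's proof.
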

 \begin{proof} 
Take $(\bm{\phi},\phi)=(\bu,\partial_t\eta)$ in \eqref{weakFluidStrut} and use Reynold's transport theorem to obtain 
\begin{equation}
\begin{aligned}
\label{enerFormalFluiStru}
\frac{1}{2}\int_I  \frac{\mathrm{d}}{\dt}\Big(\Vert \bu\Vert_{L^2(\Oeta)}^2 
&+
 \Vert\partial_t \eta\Vert_{L^2(\omega)}^2 
+ 
\Vert\dely\eta\Vert_{L^2(\omega)}^2
\Big)\dt
+
 \int_I\Big(\Vert \nabx \bu\Vert_{L^2(\Oeta)}^2
 +
 \Vert\partial_t\naby \eta\Vert_{L^2(\omega)}^2\Big)\dt
\\&
=-\int_I\int_{\Oeta}\bT :\nabx \bu \dx\dt
+
\int_I\int_{\Oeta}\bff \cdot \bu \dx\dt
+
\int_I \int_\omega g\partial_t \eta\dy\dt
\end{aligned}
\end{equation}
Next, we take the trace in \eqref{solute}, integrate and use \eqref{bddCondSolv}  and the relation $\mathrm{tr}(\mathbb{A}\mathbb{B}^\top)=\mathbb{A}:\mathbb{B}$ which holds for all $\mathbb{A},\mathbb{B}\in \mathbb{R}^{d\times d}$ to obtain
\begin{equation}
\begin{aligned}
\label{freeEnergyEst}
\frac{1}{2}\int_I\frac{\dd}{\dt}\int_{\Oeta} \mathrm{tr}(\bT)\dx\dt
+
  \int_I
  \int_{\Oeta}
  \mathrm{tr}(\bT)\dx\dt 
&=
\int_I
\int_{\Oeta}
 \bT:\nabx \bu \dx \dt
+3
\int_I
\int_{\Oeta}\rho \dx\dt. 
\end{aligned}
\end{equation}
If we also integrate \eqref{rhoEqu} over space and use \eqref{bddCondSolv}, we obtain
\begin{align*}
\int_{\Oeta}\rho \dx=\int_{\Omega_{\eta_{0}}}\rho_0 \dx
\end{align*} 
whereas, if we set $ \psi=\rho$ in \eqref{weakRhoEq} and use Reynold's transport theorem, we obtain
\begin{align*}  
\frac{1}{2}
\int_I  \frac{\mathrm{d}}{\dt}
\Vert\rho \Vert_{L^2(\Oeta)}^2 \dt 
&+
\int_I\Vert\nabx\rho\Vert_{L^2(\Oeta)}^2\dt
=0.
\end{align*}
Combining the equations above with the following inequalities
\begin{align*}
&\int_I\int_{\Oeta}\bff \cdot \bu \dx\dt
\leq
c
\int_I\Vert \bff\Vert_{L^2(\Oeta)}^2\dt
+
\frac{1}{4}
\sup_{t\in I}\Vert\bu(t)\Vert_{L^2(\Oeta)}^2 ,
\\&
\int_I \int_\omega g\partial_t \eta\dy\dt
\leq
c
\int_I\Vert g\Vert_{L^2(\omega)}^2\dt
+
\frac{1}{4}
\sup_{t\in I}\Vert\partial_t \eta(t)\Vert_{L^2(\omega)}^2
\end{align*}
yields
\begin{equation}
\begin{aligned}
\label{apriori1}
\sup_{t\in I}&
\bigg(  
\Vert \bu(t)\Vert_{L^2(\Oeta)}^2 
+
 \Vert\partial_t \eta(t)\Vert_{L^2(\omega)}^2 
+ 
\Vert\dely\eta(t)\Vert_{L^2(\omega)}^2
+
\Vert\rho(t)\Vert_{L^2(\Oeta)}^2
 + 
\int_{\Oeta}\mathrm{tr}(\bT(t))\dx
\bigg)
\\&+
\int_I
\int_{\Oeta}\mathrm{tr}(\bT)\dx\dt
+\int_I
\big(
\Vert\nabx \rho \Vert_{L^2(\Oeta)}^2
+
\Vert \nabx \bu \Vert_{L^2(\Oeta)}^2
+\Vert\partial_t\naby \eta \Vert_{L^2(\omega)}^2\big)\dt 
\\&\lesssim
\mathcal{E}_{\mathrm{w}}(\mathrm{data}) 
\end{aligned}
\end{equation} 
where
\begin{equation}
\begin{aligned}
\mathcal{E}_{\mathrm{w}}(\mathrm{data})
:=&
\int_{\Omega_{\eta_{0}}}\mathrm{tr}(\bT_0)\dx
+
\Vert \bu_0\Vert_{L^2(\Omega_{\eta_{0}})}^2 
+
 \Vert\eta_\star\Vert_{L^2(\omega)}^2 
+ 
\Vert\dely\eta_0\Vert_{L^2(\omega)}^2
\\&
+
\Vert\rho_0 \Vert_{L^2(\Omega_{\eta_0})}^2
+
T
\int_{\Omega_{\eta_{0}}}\rho_0 \dx
+
\int_I\Vert \bff\Vert_{L^2(\Oeta)}^2\dt
+
\int_I\Vert g\Vert_{L^2(\omega)}^2\dt.
\end{aligned}
\end{equation}
Next, just as in \eqref{strgEstFP3}, we obtain 
\begin{equation}
\begin{aligned}
\label{strgEstFP3xx}  
\sup_{t\in I}&
\Vert\bT(t)\Vert_{L^2(\Oeta)}^2 
+\int_I\big( 
\Vert\nabx \bT \Vert_{L^2(\Oeta)}^2
+
\Vert\bT\Vert_{L^2(\Oeta)}^2
\big)\dt
\\&
\lesssim  
\Vert\bT_0\Vert_{L^2(\Omega_{\eta_0})}^2
+
T \Vert\rho_0  \Vert_{L^2(\Omega_{\eta_{0}})}^2  
\end{aligned}
\end{equation}
by testing \eqref{weakFokkerPlanckEq} with $\bT$ and using \eqref{apriori1}. 

Now, for a strong solution of the solvent-structure subsystem \eqref{divfree} and \eqref{momEq}-\eqref{shellEQ}, it has been shown in  \cite[Theorem  4.1]{BMSS})  that if
 \begin{align*}
\eta \in L^\infty(I:C^1(\omega)), \quad \bu\in L^{r }(I;L^{s }(\Oeta)), \qquad \frac{2}{r }+\frac{3}{s }\leq 1,
 \end{align*}
then the following global acceleration bound holds
\begin{equation}
\begin{aligned}
\label{acceleration}
\sup_{t\in I}&
\bigg( 
\Vert \nabx\bu(t)\Vert_{L^2(\Oeta)}^2 
+
 \Vert\partial_t \naby\eta(t)\Vert_{L^2(\omega)}^2 
+ 
\Vert\naby\dely\eta(t)\Vert_{L^2(\omega)}^2
\bigg)
\\&+
\int_I \big(
\Vert \nabx^2 \bu \Vert_{L^2(\Oeta)}^2
+
\Vert \partial_t \bu \Vert_{L^2(\Oeta)}^2
+
\Vert \nabx p \Vert_{L^2(\Oeta)}^2
+
\Vert\partial_t\dely \eta \Vert_{L^2(\omega)}^2
+
\Vert\partial_t^2 \eta \Vert_{L^2(\omega)}^2
\big)\dt
\\&\lesssim 
\Vert \nabx \bu_0\Vert_{L^2(\Omega_{\eta_{0}})}^2 
+
 \Vert \naby\eta_\star\Vert_{L^2(\omega)}^2 
+ 
\Vert\naby\dely\eta_0\Vert_{L^2(\omega)}^2
\\&
+
\int_I\Vert \bff\Vert_{L^2(\Oeta)}^2\dt
+
\int_I\Vert \nabx \bT\Vert_{L^2(\Oeta)}^2\dt
+
\int_I\Vert g\Vert_{L^2(\omega)}^2\dt.
\end{aligned}
\end{equation}
The constant in the bound depends only on the right-hand side of \eqref{apriori1} and we also note that by \eqref{strgEstFP3xx}  
\begin{equation}
\begin{aligned}
\label{tEstAl}
\int_I\Vert \nabx \bT\Vert_{L^2(\Oeta)}^2\dt
\lesssim
\Vert\bT_0\Vert_{L^2(\Omega_{\eta_0})}^2
+
T \Vert\rho_0  \Vert_{L^2(\Omega_{\eta_{0}})}^2 .
\end{aligned}
\end{equation} 
We now test \eqref{rhoEqu}  with $\Delx \rho$. Just as in \eqref{spaceRegRho1}, this yields
\begin{equation}
\begin{aligned}
\label{spaceRegRho1xx}
\sup_{t\in I}\Vert \nabx\rho(t)\Vert_{L^2(\Oeta)}&+\int_I\Vert \Delx\rho\Vert_{L^2(\Oeta)}\dt
\\&\lesssim
\Vert \nabx\rho_0\Vert_{L^2(\Omega_{\eta_0})}
+
\int_I\big(\Vert\partial_t\eta  \Vert_{W^{2,2}(\omega)}^2+\Vert\bu\Vert_{W^{2,2}(\Oeta)}^2\big)\dt
\end{aligned}
\end{equation}
and by testing \eqref{solute} with $\Delx \bT$, similar to \eqref{spaceRegT1}, we obtain 
\begin{equation}
\begin{aligned}
\label{spaceRegT1xx}
\sup_{t\in I}&\Vert \nabx\bT(t)\Vert_{L^2(\Oeta)}
+
\int_I\Vert \Delx\bT\Vert_{L^2(\Oeta)}\dt
\\&\lesssim
\Vert \nabx\bT_0\Vert_{L^2(\Omega_{\eta_0})}
+
\int_I\big(\Vert\partial_t\eta  \Vert_{W^{2,2}(\omega)}^2+\Vert\bu\Vert_{W^{2,2}(\Oeta)}^2\big)\dt
\\&
+ \Vert\bT_0\Vert_{L^2(\Omega_{\eta_0})}^2
+
T \Vert\rho_0  \Vert_{L^2(\Omega_{\eta_{0}})}^2 
.
\end{aligned}
\end{equation}
Finally, by testing \eqref{rhoEqu}  with $\partial_t \rho$ and testing \eqref{solute} with $\partial_t\bT$, similar to \eqref{timeRegRho1} and \eqref{timeRegT1}, we obtain
\begin{equation}
\begin{aligned}
\label{timeRegRho1xx}
\int_I\Vert \partial_t \rho\Vert_{L^2(\Oeta)}^2\dt
&+
\sup_{t\in I} \Vert \nabx \rho(t)\Vert_{L^2(\Oeta)}^2
\\&\lesssim
\Vert \nabx\rho_0\Vert_{L^2(\Omega_{\eta_0})}
+
\int_I\big(\Vert\partial_t\eta  \Vert_{W^{2,2}(\omega)}^2+\Vert\bu\Vert_{W^{2,2}(\Oeta)}^2\big)\dt.
\end{aligned}
\end{equation}
and
\begin{equation}
\begin{aligned}
\label{timeRegT1xx}
\int_I\Vert \partial_t \bT\Vert_{L^2(\Oeta)}^2\dt
&+
\sup_{t\in I} \Vert \nabx \bT(t)\Vert_{L^2(\Oeta)}^2
\\&\lesssim
\Vert \nabx\bT_0\Vert_{L^2(\Omega_{\eta_0})}
+
\int_I\big(\Vert\partial_t\eta  \Vert_{W^{2,2}(\omega)}^2+\Vert\bu\Vert_{W^{2,2}(\Oeta)}^2\big)\dt
\\&
+
 \Vert\bT_0\Vert_{L^2(\Omega_{\eta_0})}^2
+
T \Vert\rho_0  \Vert_{L^2(\Omega_{\eta_{0}})}^2 
.
\end{aligned}
\end{equation}
respectively.
If we now combine \eqref{spaceRegRho1xx}, \eqref{spaceRegT1xx} \eqref{timeRegRho1xx} and \eqref{timeRegT1xx}, we obtain
\begin{equation}
\begin{aligned} 
\int_I\big(\Vert \partial_t \rho\Vert_{L^2(\Oeta)}^2
&+
\Vert \partial_t \bT\Vert_{L^2(\Oeta)}^2
\big)\dt
+
\sup_{t\in I} \big(\Vert \rho(t)\Vert_{W^{1,2}(\Oeta)}^2
+
\Vert \bT(t)\Vert_{W^{1,2}(\Oeta)}^2
\big)
\\&
+\int_I\big(\Vert  \rho\Vert_{W^{2,2}(\Oeta)}
+
\Vert  \bT\Vert_{W^{2,2}(\Oeta)}\big)
\\&\lesssim
\Vert  \rho_0\Vert_{W^{1,2}(\Omega_{\eta_0})}
+
\Vert  \bT_0\Vert_{W^{1,2}(\Omega_{\eta_0})}
+
\int_I\big(\Vert\partial_t\eta  \Vert_{W^{2,2}(\omega)}^2+\Vert\bu\Vert_{W^{2,2}(\Oeta)}^2\big)\dt.
\label{eq:thm:mainFPxxx}
\end{aligned}
\end{equation}   
Note that we also obtain from \eqref{shellAlone}, trace theorem, \eqref{acceleration} and \eqref{tEstAl} 
\begin{equation}
\begin{aligned}
\label{higherShellReg}
\int_I\Vert\dely^2 \eta\Vert_{L^2(\omega)}^2\dt
\lesssim&
\int_I\Vert g\Vert_{L^2(\omega)}^2\dt
+ 
\int_I\Vert\partial_t^2 \eta\Vert_{L^2(\omega)}^2\dt 
+
\int_I\Vert \partial_t\dely \eta \Vert_{L^2(\omega)}^2\dt
\\&
+
\int_I \big(
\Vert \nabx^2 \bu \Vert_{L^2(\Oeta)}^2 
+
\Vert \nabx p \Vert_{L^2(\Oeta)}^2
+
\Vert \nabx \bT \Vert_{L^2(\Oeta)}^2
\big)\dt
\\
\lesssim& 
\Vert \nabx \bu_0\Vert_{L^2(\Omega_{\eta_{0}})}^2 
+
 \Vert \naby\eta_\star\Vert_{L^2(\omega)}^2 
+ 
\Vert\naby\dely\eta_0\Vert_{L^2(\omega)}^2
\\&
+
\int_I\Vert \bff\Vert_{L^2(\Oeta)}^2\dt
+
\int_I\Vert g\Vert_{L^2(\omega)}^2\dt
+
\Vert\bT_0\Vert_{L^2(\Omega_{\eta_0})}^2
+
T \Vert\rho_0  \Vert_{L^2(\Omega_{\eta_{0}})}^2.
\end{aligned}
\end{equation} 
Finally, combining \eqref{acceleration}, \eqref{eq:thm:mainFPxxx} and \eqref{higherShellReg} yields the desired estimate.

 \end{proof}

\section*{Statements and Declarations} 
\subsection*{Author Contribution}
The author wrote and reviewed the manuscript.
\subsection*{Conflict of Interest}
The author declares that they have no conflict of interest.
\subsection*{Data Availability Statement}
Data sharing is not applicable to this article as no datasets were generated
or analyzed during the current study.
\subsection*{Competing Interests}
The author have no competing interests to declare that are relevant to the content of this article.
%
%
%

\end{document}